\documentclass[10pt,twocolumn,twoside]{IEEEtran}
\usepackage[T1]{fontenc}
\usepackage{cite}
\usepackage{amsmath,amssymb,amsfonts,mathtools}
\usepackage{graphicx}
\usepackage{xcolor}
\definecolor{DarkGreen}{RGB}{1,120,32}
\usepackage{tikz}
\usetikzlibrary{cd,arrows.meta,positioning,calc}
\usepackage[hidelinks]{hyperref}

\usepackage{amsthm}
\usepackage{bm}

\makeatletter
\def\subsection{\@startsection{subsection}{2}{\z@}
  {1.5ex plus 1ex minus 0.5ex}
  {0ex}
  {\normalfont\normalsize\itshape}}
\makeatother

\theoremstyle{plain}
\newtheorem{theorem}{Theorem}
\newtheorem{proposition}{Proposition}
\newtheorem{corollary}{Corollary}

\theoremstyle{definition}
\newtheorem{definition}{Definition}
\newtheorem{assumption}{Assumption}
\newtheorem{example}{Example}
\theoremstyle{remark}
\newtheorem{remark}{Remark}

\newcommand{\C}{\mathsf C}
\newcommand{\F}{\mathcal F}
\newcommand{\R}{\mathbb R}
\newcommand{\A}{\mathcal A}
\newcommand{\id}{\mathrm{id}}
\newcommand{\Eq}{\operatorname{Eq}}
\newcommand{\Ctrl}{\operatorname{Ctrl}}

\newcommand{\Man}{\mathsf{Man}}
\newcommand{\pr}{\operatorname{pr}}
\newcommand{\ks}{\kappa_{\rm safe}}
\newcommand{\fcl}{f_{\rm cl}}
\newcommand{\PR}{\mathbf P_{\rm RoM}}
\newcommand{\PF}{\mathbf P_{\rm FoM}}
\newcommand{\PC}{\mathbf P_{\rm cl}}
\newcommand{\MS}{\mathbf M_{\rm sf}}
\newcommand{\RH}{\mathbf R_h}
\newcommand{\RV}{\mathbf R_V}
\newcommand{\RO}{\mathbf R_0}
\newcommand{\QT}{\mathbf Q}
\newcommand{\zero}[1]{\mathbf 0_{#1}}
\newcommand{\unit}[1]{\mathbf 1_{#1}}
\newcommand{\define}[1]{\textbf{#1}}
\newcommand{\maps}{\colon}
\newcommand{\To}{\Rightarrow}
\newcommand{\into}{\hookrightarrow}

\newcommand{\bmap}[1]{\bm #1}

\tikzcdset{arrow style=tikz,diagrams={>=Stealth},column sep=large,row sep=large}

\begin{document}

\title{\LARGE \textbf{Safety of Layered Control Systems via Control Barrier Morphisms}}

\author{Aaron D. Ames, Joe Moeller, Massimiliano de Sa, and Paulo Tabuada
\thanks{The authors are with the California Institute of Technology, Pasadena CA 91125, U.S.A. {\tt\small \{ames, jmoeller, mdesa\}@caltech.edu} and UCLA, Los Angeles CA 90095-1594, {\tt\small tabuada@ee.ucla.edu}.}
\thanks{This research is funded by the AFOSR SCHEMA CoE and MURI grant Hybrid Dynamics - Deconstruction and
Aggregation (HyDDRA).}
}

\maketitle
\thispagestyle{empty}
\pagestyle{empty}

\begin{abstract}
Layered control architectures provide a powerful paradigm for synthesizing controllers for complex systems. Yet, translating guarantees at each layer to system-level guarantees is challenging due to the interactions between layers. To address this problem, this paper presents a categorical perspective on layered control architectures. Individual systems are represented as \emph{control coalgebras}, their interactions are specified as morphisms to \emph{mediators}, and system-level behaviors are obtained via composition defined categorically as a pullback. To analyze the safety of these systems, we formulate \emph{control barrier morphisms (CBMs)}, the categorical generalization of control barrier functions. We prove that feedback controllers satisfying a CBM condition render the associated safe subobject forward invariant. It is through CBMs that we can establish the safety of layered systems: given a CBM at each layer subject to compatibility conditions, the universal property of the pullback implies the existence of a CBM for the composed system. Safety of a full-order model being safety filtered by a reduced-order model is recovered as a specialization of the categorical framework. 
\end{abstract}

\section{Introduction}
\label{sec:intro}

Layered control architectures allow complex systems to be controlled through simpler models \cite{Matni2024,cohen2024safety}.
A canonical example is the ``planner-tracker'' paradigm, where a planning (or safety) layer computes a reference using a reduced-order model, while a low-level controller on the full-order model tracks this reference signal. These architectures have the advantage of decomposing control problems into simpler subproblems that are tractable, but also introduce a question that cannot be answered by analyzing either layer in isolation: when does a guarantee established on the reduced-order model remain valid on the full-order model? This question has increased in importance due to the need to establish safety guarantees for highly dynamic robotic systems.

The problem of identifying architectures has been investigated frequently over the years from a variety of perspectives, including: aggregation \cite{Aoki1968Aggregation}, hierarchies \cite{Mesarovic1970Hierarchical}, supervision \cite{ZhongWonham1990Consistency}, behavioral interconnection \cite{Willems2007Behavioral}, and symbolic abstractions \cite{Tabuada2009Verification}, to name a few.  More recent approaches formulate layered architectures through contracts \cite{Mazo2024Contract,10.1145/3764587} and multi-rate control \cite{Rosolia2022}. 
These complementary perspectives indicate the fundamental nature of control architectures, and the need to make guarantees for complex systems by studying constituent components and their interactions. This paper studies layered architectures from the perspective of safety utilizing the framework of category theory, enabling us to determine how the safety of layered control architectures can be established from the safety of individual layers and their interactions.  

As an illustration of the results in this paper, consider
Figure~\ref{fig:architecture} where  a Control Barrier Function (CBF)-based safety filter \cite{AmesCBF2017}, designed for a Reduced Order Model (RoM), is implemented in a Full Order Model (FoM) via an existing tracking controller. 
The safety filter utilizes a CBF on the RoM to transform a desired input signal into an input signal that guarantees safety.  
The FoM realizes the input via a tracking controller, introducing tracking error. 
Prior results combine the RoM barrier with tracking guarantees on the FoM to obtain a safety guarantee for the FoM~\cite{Molnar2022,Compton2025}. This paradigm provides a starting point for building a theory of composition that yields safety guarantees. 

Category theory gives a natural language in which to reason about the composition.  \emph{Coalgebras} provide a general formulation for systems and their structure-preserving maps \cite{Rutten2000}.  \emph{Pullbacks} describe heterogeneous composition of tagged systems \cite{AmesNESC2006} and nonlinear feedback interconnection \cite{Tabuada2008}.
\emph{Diagrams} allow for the interconnection of open dynamical systems  \cite{Vagner2015}, for hybrid systems to be composed \cite{Culbertson2020}, and for  the composition of Lyapunov certificates via assume-guarantee reasoning \cite{CapucciMyers2026}.  This paper explores categorical composition through the lens of safety. 

\begin{figure}[t]
\centering
\resizebox{1.0\columnwidth}{!}{
\begingroup
\definecolor{heroBlue}{RGB}{42,57,143}
\definecolor{heroGreen}{RGB}{58,112,70}
\definecolor{heroInk}{RGB}{40,44,49}
\definecolor{heroGray}{RGB}{131,139,146}
\begin{tikzpicture}[
  x=1mm,y=1mm,
  >=Stealth,
  line cap=round,line join=round,
  every node/.style={font=\fontsize{7.5}{8.3}\selectfont,
                    text=heroInk,inner sep=1.2pt,outer sep=.3pt},
  hero/panel/.style={draw=black!16,fill=black!1.5,
                    rounded corners=2mm,line width=.45pt},
  hero/box/.style={draw=black!50,fill=white,rounded corners=1mm,
                  line width=.55pt,align=center,
                  minimum width=18.5mm,minimum height=9mm},
  hero/med/.style={hero/box,draw=heroBlue,fill=heroBlue!4},
  hero/target/.style={hero/box,draw=heroBlue!60,fill=white},
  hero/composite/.style={draw=heroGreen,fill=heroGreen!5,
                        rounded corners=1mm,line width=.75pt,
                        align=center,minimum height=13mm},
  hero/strict/.style={-{Stealth[length=1.5mm,width=1.1mm]},
                     draw=heroInk,line width=.55pt},
  hero/cert/.style={-{Stealth[length=1.6mm,width=1.2mm]},
                   draw=heroBlue,dash pattern=on 2pt off 1.5pt,
                   line width=.7pt},
  hero/certline/.style={draw=heroBlue,dash pattern=on 2pt off 1.5pt,
                       line width=.7pt},
  hero/construction/.style={-{Stealth[length=1.8mm,width=1.8mm]},
                           draw=heroGray!65,line width=1.3pt},
  hero/maplabel/.style={font=\fontsize{7.4}{8}\selectfont,
                       inner sep=.5pt,text=heroInk},
  hero/title/.style={font=\fontsize{7.3}{8.1}\selectfont\bfseries,
                    align=center,inner sep=0pt},
  hero/notation/.style={font=\fontsize{10.3}{11.5}\selectfont}
]

\path[hero/panel] (1,5.5) rectangle (22,45.5);
\path[hero/panel] (27,5.5) rectangle (49,45.5);
\path[hero/panel] (64,5.5) rectangle (87,45.5);

\node[hero/box] (rom) at (11.5,39) {\textbf{RoM}};
\node[hero/med] (filter) at (11.5,25.5)
  {\textbf{Safety filter}\\[-.1mm] $\kappa_{\rm safe}$};
\node[hero/box] (fom) at (11.5,12)
  {\textbf{FoM}\\[-.1mm] with tracker};
\draw[hero/strict] (rom.south) -- (filter.north);
\draw[hero/strict] (filter.south) -- (fom.north);

\node[hero/box,hero/notation] (PR) at (38,39)
  {$\mathbf P_{\rm RoM}$};
\node[hero/med,hero/notation] (MS) at (38,25.5)
  {$\mathbf M_{\rm sf}$};
\node[hero/box,hero/notation] (PF) at (38,12)
  {$\mathbf P_{\rm FoM}$};
\draw[hero/strict] (PR.south) -- (MS.north)
  node[midway,right=1mm,hero/maplabel] {$\bmap{\phi}_R$};
\draw[hero/strict] (PF.north) -- (MS.south)
  node[midway,right=1mm,hero/maplabel] {$\bmap\phi_F$};

\node[hero/target,hero/notation] (RH) at (75.5,39)
  {$\mathbf R_h$};
\node[hero/med,hero/notation] (R0) at (75.5,25.5)
  {$\mathbf R_0$};
\node[hero/target,hero/notation] (RV) at (75.5,12)
  {$\mathbf R_V$};
\draw[hero/strict] (RH.south) -- (R0.north)
  node[midway,right=1mm,hero/maplabel] {$\bmap{n}_1$};
\draw[hero/strict] (RV.north) -- (R0.south)
  node[midway,right=1mm,hero/maplabel] {$\bmap{n}_2$};

\draw[hero/construction] (22.6,25.5) -- (26.4,25.5);

\node[hero/composite,minimum width=31mm] (PC) at (38,58)
  {{\fontsize{10.2}{11.2}\selectfont
      $\mathbf P_{\rm cl}$}\\[.9mm]
   {\fontsize{7.3}{8}\selectfont
      $=\mathbf P_{\rm RoM}\!\times_{\mathbf M_{\rm sf}}\!\mathbf P_{\rm FoM}$}};
\node[hero/composite,minimum width=23mm] (Q) at (75.5,58)
  {{\fontsize{10.2}{11.2}\selectfont $\mathbf Q$}\\[.9mm]
   {\fontsize{7.3}{8}\selectfont
      $=\mathbf R_h\!\times_{\mathbf R_0}\!\mathbf R_V$}};
\node[hero/title] at (38,68) {System pullback};
\node[hero/title] at (75.5,68) {Target pullback};
\draw[hero/construction] (38,46.3) -- (PC.south);
\draw[hero/construction] (75.5,46.3) -- (Q.south);

\draw[hero/cert,line width=.95pt] (PC.east) -- (Q.west)
  node[midway,above=1.7mm,hero/maplabel,text=heroBlue]
  {$\exists!\,\chi$};

\draw[hero/cert] (PC.south east) -- (57,48.5) -- (57,39)
  -- (RH.west);
\draw[hero/cert] (57,39) -- (57,12) -- (RV.west);
\fill[heroBlue] (57,39) circle[radius=.45mm];
\node[hero/maplabel,text=heroBlue,anchor=south] at (61.4,40.1)
  {$\chi_h$};
\node[hero/maplabel,text=heroBlue,anchor=south] at (61.4,13.1)
  {$\chi_V$};

\draw[-{Stealth[length=1.9mm,width=1.4mm]},heroGreen,
      line width=.9pt]
  (PC.west) to[out=180,in=90] (11.5,46.4);
\node[hero/title,text=heroGreen] at (10.8,64)
  {Safety\\guarantees};

\node[hero/title] at (11.5,1)
  {Layered control\\architecture};
\node[hero/title] at (38,1)
  {Categorical\\representation};
\node[hero/title] at (75.5,1)
  {Comparison\\systems};

\node[draw=heroGreen!35,fill=heroGreen!4,rounded corners=.9mm,
      minimum width=86mm,minimum height=7.5mm,align=center,
      font=\fontsize{8.2}{9}\selectfont,text=heroGreen]
  at (44,-10)
  {\textbf{Safety:} Forward-invariance of
   $S_\chi=\Eq^{\le}(\zero X,\chi) \subset\mathcal C$};

\end{tikzpicture}
\endgroup
}
\caption{
A classic layered control architecture represented categorically, for which the closed loop system is the pullback.  Each layer has an associated control barrier morphism, and the universal property of the pullback implies the existence of a control barrier morphism for the layered architecture.
\vspace{-0.4cm}
}
\label{fig:composition-overview}
\end{figure}

The first contribution of this paper is a categorical formulation of CBFs. To this end, we generalize control systems to a categorical setting by modeling them as coalgebras. Taking inspiration from categorical Lyapunov theory \cite{CLT1,CLT2}, we formulate a \emph{control barrier morphism (CBM)} as a map from a control coalgebra to a target comparison system. The collection of safe actions is represented by the \emph{admissible locus} of a CBM. The existence of a feedback controller (section) of the admissible locus yields a closed loop system that is safe: it renders a subobject defined by the CBM forward invariant. 

The second contribution is to frame composition as the categorical pullback. This endows composition with a \emph{universal property}. Given two control systems, their interaction is framed as morphisms to a \emph{mediator}, and the interconnected system is the \emph{pullback}. The universal property of the pullback together with barrier morphisms enables the main result: given CBMs on each system that agree on the mediator, the result is a CBM for the composed system implying safety of a subobject. We apply this result to the motivating problem: a CBF-safety filter utilizing a RoM and tracked by a FoM, thus demonstrating that specialization of the main result leads to a CBF guaranteeing safety of the layered architecture. Importantly, the CBF for the layered system emerges as a consequence of the categorical framework, rather than from heuristic constructions, demonstrating its value. 

\section{Motivation: Layered Safety Filters}
\label{sec:background}

Consider a layered architecture (cf. Fig. \ref{fig:architecture}) consisting of a reduced-order model (RoM) and full-order model (FoM): 
\begin{equation}
\begin{aligned}
\textrm{RoM} \qquad  \dot z & =f_{\rm R}(z,\nu):=f(z)+g(z)\nu, \\
\textrm{FoM} \qquad \dot x & =f_{\rm F}(x,v),
\end{aligned}
 \label{eq:models}
\end{equation}
where the states $z$ and $x$ range over open subsets $Z\subseteq\R^{n_z}$ and $X\subseteq\R^{n_x}$, and the inputs $\nu$ and $v$ range over $U_\nu=\R^m$ and $U_v=\R^m$, respectively. 
We keep separate names for the two input spaces, even though they coincide, because the safety filter in Section~\ref{sec:application} treats them as distinct interfaces.
The inputs to the RoM and FoM have the same dimension because we are implicitly assuming that $f_{\rm F}$ is obtained from the ``true'' dynamics of the system, $\dot{x} = F(x) + G(x) u$ together with a tracking controller $u = K(x,v)$ wherein $f_{\rm F}(x,v) = F(x) + G(x) K(x,v)$. Based on the  inputs $\nu$ of the RoM and the state $x$ of the FoM, the tracking controller forces the FoM to converge to the RoM's behavior.

The RoM and FoM are coupled by a safety filter. Given the \textit{state projection} $\Pi \colon X \to Z$ and a safety function $h \colon Z\to\R$ the safe set for the FoM is given by:   
\begin{equation}
 \mathcal C=\{x\in X:h(\Pi(x))\ge0\}.
 \label{eq:physical-safe}
\end{equation}
We assume that $h$ is a control barrier function (CBF) for the RoM, with the corresponding CBF-QP safety filter: 
\begin{align}
\label{eq:qp}
 \ks(z)= & \underset{\nu\in U_{\nu}}{\operatorname{argmin}}\quad
 \tfrac12\|\nu-\nu_{\rm des}(z)\|^2\qquad\\
  \qquad & ~\textrm{s.t.} \quad  L_fh(z)+L_gh(z)\nu\ge-\alpha h(z)
 + \tfrac{\| L_g h(z) \|^2}{4 \gamma \rho} \nonumber
\end{align}
with $\alpha> 0$ (taken to be a constant for simplicity, although a  $\mathcal{K}_{\infty}^e$ function would also be possible), $L_fh=Dh\,f$ and $L_gh=Dh\,g$ are the Lie derivatives, and the normed squared term is for robustness. In the layered architecture, the safe controller is passed to the FoM via the tracking controller, i.e., setting the reference velocity to $v = \kappa_{\rm safe}(\Pi(x))$ wherein:
\begin{equation}
 \dot x=\fcl(x):=f_{\rm F}(x,\ks(\Pi(x))).
 \label{eq:closedloop}
\end{equation}

\begin{figure}[t]
\centering
\begin{tikzpicture}[>=Stealth, 
layer/.style={draw=black!60, rounded corners=2pt, align=center, text width=5.9cm, inner sep=5pt, font=\small},
lab/.style={font=\scriptsize, fill=white, inner sep=1.5pt}]
    \node[layer] (r) {\textbf{Reduced-order model (RoM)}\\[1mm]
    $\dot z=f(z)+g(z)\nu,\qquad h(z)\ge0$};
    \node[layer,below=10mm of r] (s) {\textbf{Safety filter}\\[1mm]
    $\displaystyle\ks(z)=\arg\min_{\nu\in U_{\nu}}\tfrac12\|\nu-\nu_{\rm des}(z)\|^2$\\[1mm]
    $\qquad  \quad \text{s.t.}\quad \dot{h}(z,\nu)  \ge-\alpha h(z) +  \frac{\| L_g h(z) \|^2}{4 \gamma \rho}$};
    \node[layer,below=10mm of s] (f) {\textbf{Full-order model with tracker}\\[1mm]
    $\dot x=f_{\rm F}(x,v),\qquad h(\Pi(x))\ge0$};
    \draw[<->] (r.south)--node[right,lab] {$z$}(s.north);
    \draw[->] (s.south)--node[right,lab] {$v=\ks(\Pi(x))$}(f.north);
    \draw[->] (f.east)--++(.60,0)|-node[pos=.25,right,font=\scriptsize,align=left]
    {$z=\Pi(x)$\\$\nu=\Psi(x)$}(r.east);
\end{tikzpicture}
\caption{A layered safety architecture: a reduced-order model is used to synthesize a safe controller, $\kappa_{\rm safe}$, which is fed to a full-order model as a reference $v = \kappa_{\rm safe}(\Pi(x))$ that is tracked, where $\Pi$ is the state projection. The goal is to guarantee safety on the full-order model: $h(\Pi(x)) \geq 0$. 
}
\label{fig:architecture}
\end{figure}

The goal is safety of the FoM, i.e., to keep the state of the system within $\mathcal C$. To this end, consider the FoM tracking error associated with the reference $v =  \kappa_{\rm safe}(\Pi(x))$: 
\[
e(x) =\Psi(x)-\ks(\Pi(x))
\]
where $\Psi \colon X \to U_{\nu}$ is the \textit{input projection}. That is, the error is the difference between the ``actual velocity'' of the system projected to the RoM input space via $\Psi$ and the ``desired velocity'' encoded by $\ks$.

A \textbf{tracking function} $V$ quantifies the ability of the FoM to bound this error via the assumptions: 
\begin{equation}
 \begin{aligned}
\textrm{tracking function:} \quad V(x) & \ge \rho\|e(x)\|^2,\\
\textrm{tracking dynamics:} \quad \ \quad \dot V & \le-\lambda V, \\
\textrm{barrier dynamics:} \quad \ \ \quad  \dot b &\ge-\alpha b-\gamma V. 
 \end{aligned}
 \label{eq:classical-bounds}
\end{equation}
for $b:=h\circ\Pi$ and $\gamma,\rho>0$. These assumptions can be used to define the \emph{composite barrier function} for the FoM: 
\[
 H(x):=(\lambda-\alpha)b(x)-\gamma V(x). 
\]
The main result of \cite{Molnar2022} is that if $\lambda > \alpha$: 
\begin{equation}
 H(c(0))\ge0\  \qquad \Longrightarrow\  \qquad c(t)\in\mathcal C \quad \forall t \in I
 \label{eq:classical-result}
\end{equation}
where $c \colon I = [0,t_f) \subseteq \R \to X$ is a solution: $\dot{c} = f_{\rm cl}(c(t))$. 
Therefore, starting from a reduced set of initial conditions defined by the zero superlevel set of $H$, the state remains within $\mathcal C$ and, therefore, the FoM is safe. These ideas were generalized in \cite{cohen2025safety} to bounded tracking and in \cite{Compton2025} to predictive CBFs. 
See~\cite{nanayakkara2026safetystateuncertaintyrobustifying} for a notion of robust CBF, generalizing input-to-state safety used in~\cite{Molnar2022}. 
This paper will demonstrate in Section~\ref{sec:application} that this result is a natural consequence of categorical composition theory.

\section{Control Barrier Morphisms}
\label{sec:control}

Categorical Lyapunov theory \cite{CLT1,CLT2} generalized Lyapunov theory to a broad class of ``systems'' by leveraging coalgebras. We build upon this framework and refer the reader to these papers for the background content needed. Importantly, this paper extends categorical Lyapunov theory to ``control systems'' through the introduction of \emph{control coalgebras}. We leverage this to generalize safety-critical control to a categorical setting: defining control barrier morphisms (generalizing CBFs categorically) that imply forward invariance of a subobject (via a categorical Nagumo theorem). 

Consider a category $\C$ with finite products and terminal object $1$. A point of $B$ is a map $x \colon 1\to B$, and $!_X \colon X\to1$ denotes the unique map to the terminal object. In the remainder of the paper we will use $\F \colon \C\to\C$ to denote a functor and $\pi \colon \F\Rightarrow\id_{\C}$ to denote a natural transformation from $\F$ to the identity functor on $\C$.

\subsection{Systems, inputs, and feedback}

A \textbf{coalgebra} is a morphism $f \colon B \to \F B$. This generalizes ``systems,'' e.g., a smooth vector field $\dot x =f(x)$ is a $T$-coalgebra $f \colon M \to TM$ where $T$ is the tangent bundle functor on $\C=\Man$, manifolds with boundary. 
Control coalgebras generalize this to include inputs. 

\begin{definition}[Control coalgebra]
    Let $\C$ be a category with finite products, $\F \maps \C \to \C$ be an endofunctor, and $\pi \maps \F \To \id_\C$ a natural transformation. A \define{control coalgebra}, denoted by $\mathbf{P} = (p \maps E \to B, f)$, consists of 
    \begin{enumerate}
        \renewcommand{\labelenumi}{(\roman{enumi})}
        \item a base object $B \in \C$ called the \define{state space}
        \item a bundle $p \maps E \to B$ called the \define{control bundle}
        \item a bundle map $f \maps (p\maps E \to B) \to (\pi_B \maps \F B \to B)$, i.e., a map $f \maps E \to \F B$ called the \define{control system}, such that $\pi_B \circ f = p$ as illustrated by the commutative diagram: 
    \[
    \begin{tikzcd}[row sep = 6mm]
        E
        \arrow[rr, "f"]
        \arrow[dr, "p"']
        &&
        \F B
        \arrow[dl, "\pi_B"]
        \\&
        B
    \end{tikzcd}
    \]
    \end{enumerate}

\end{definition}

A \define{morphism} of control coalgebras $\bmap{\phi} = (\phi^B,\phi^E)\colon \mathbf P\to\mathbf P'$ consists of maps $\phi^B \colon B\to B'$ and $\phi^E \colon E\to E'$ with:
\begin{equation}
 p'\circ\phi^E=\phi^B\circ p,\qquad
 f'\circ\phi^E=\F\phi^B\circ f.
 \label{eq:strict}
\end{equation}
These objects and maps form the category $\Ctrl_{\F}(\C)$.

A \define{feedback controller} is a section $k\colon B\to E$, i.e., $p\circ k=\id_B$. It produces the \emph{autonomous system} $\fcl=f\circ k \colon B\to\F B$ as illustrated by a commutative diagram: 
\[
\begin{tikzcd}[row sep = 6mm]
    E
    \arrow[rr, "f"]
    \arrow[dr, "p"description]
    &&
    \F B
    \arrow[dl, "\pi_B"description]
    \\ 
    &
    B
    \arrow["k", from=2-2, to=1-1, bend left = 4 0]
    \arrow["f_{\rm cl} := f \circ k"', from=2-2, to=1-3, bend right = 40]
\end{tikzcd}
\]
Autonomous systems are the control coalgebras with $p=\id_B$, i.e., $(\id_B \colon B \to B,f_{\rm cl})$, and therefore satisfy $\pi_B \circ \fcl=\id_B$.

\begin{example}[Continuous-time control systems]
\label{ex:continuous}
For $\C=\Man$ and $\F=T$, the tangent bundle functor, the natural projection is $\pi_M \colon TM\to M$. In the case that $E=M\times U$ and $p=p_M$, a control coalgebra is illustrated by: 
\[
\begin{tikzcd}[row sep = 6mm]
    M \times U
    \arrow[rr, "f"]
    \arrow[dr, "p"description]
    &&
    T M 
    \arrow[dl, "\pi_M"description]
    \\&
    M
    \arrow["k", from=2-2, to=1-1, bend left = 40]
    \arrow["f_{\rm cl} := f \circ k"', from=2-2, to=1-3, bend right = 40]
\end{tikzcd}
\]
In local coordinates, we recover the classical notions: 
\begin{itemize}
    \item control system: $\dot x=f(x,u)$ for $u \in U$, 
    \item feedback controller:
    $k(x)=(x,\kappa(x))$, 
    \item closed-loop system:
    $f_{\rm cl}(x)=f(x,\kappa(x))$.
\end{itemize}
\end{example}

Let $T$ be a time object in $\C$, i.e., an object equipped with a distinguished point $0_T \colon 1 \to T$. Following \cite{moeller2026hybrid}, consider a collection of \define{intervals} $I$ consisting of subobjects $\iota \colon I \into T$ with initial point $0_I \colon 1\to I$ satisfying $\iota \circ 0_I = 0_T$. On these intervals, a \define{unit clock} $\unit I \colon I\to\F I$ is a system with $\pi_I \circ \unit I=\id_I$. A \define{solution} of an autonomous system $f_{\rm cl} \colon B\to\F B$ is a map $c \colon I\to B$ satisfying\footnote{For the benefit of the reader less familiar with (lax) commuting diagrams, we use $\rightsquigarrow$ to indicate the (in)equality the diagram represents.}: 
\begin{equation}
\label{eqn:solutiondiag}
    \begin{tikzcd}
        I
        \arrow[r,"c"]
        \arrow[d,"\unit{I}"']
        &
        B
        \arrow[d,"f_{\rm cl}"]
        \\
        \F I
        \arrow[r,"{\F c}"']
        &
        \F B
    \end{tikzcd} 
    \qquad \rightsquigarrow \qquad 
    \F c\circ\unit I=f_{\rm cl}\circ c
\end{equation}
For Example~\ref{ex:continuous}, $T = \R$, $I = [0,t_f) \subseteq \R$ with $t_f > 0$, $\unit I(t)=(t,1)$ and \eqref{eqn:solutiondiag} implies that $\dot c(t)=f_{\rm cl}(c(t))$. 

\subsection{Lax equalizers, barrier morphisms and safe subobjects}

\begin{definition}[Orders]
\label{def:posetaldef}
An object $R\in\C$ is \define{preordered} if, for every object $X\in\C$, the hom-set $\C(X,R)$ is equipped with a preorder, $f_1 \leq f_2$, and precomposition is order-preserving: if $g_1\le g_2\maps Y\to R$ and $f\maps X\to Y$, then $g_1\circ f\le g_2\circ f$. 

A preordered object $R $ is \define{posetal} if the preorder on $\C(X,R)$ is antisymmetric: $f_1 \leq f_2$ and $f_2 \leq f_1$ implies $f_1 =  f_2$. 
\end{definition}

This notion of preordered objects allows for the formulation of lax equalizers defined via a \emph{universal property}.

\begin{definition}[Lax equalizers]
    Let $R$ be preordered. The \define{lax equalizer} of a pair of maps $a,b \colon X \to R$ is an object $\Eq^{\le}(a,b)$ and map $e \colon \Eq^{\le}(a,b) \to X$ satisfying: 
    \[
    \begin{tikzcd}[row sep = 6mm]
        \Eq^\leq(a,b)
        \arrow[r, "e"]
        \arrow[d, "e"']
        &
        X
        \arrow[d, "b"]
        \\
        X
        \arrow[r, "a"']
        \arrow[ur, phantom, sloped, "\leq", pos = 0.5]
        &
        R
    \end{tikzcd}
    \quad \rightsquigarrow \quad 
    a \circ e \leq b \circ e , 
    \]
    such that for every $q \colon Q\to X$,
    \begin{equation}
        a\circ q\le b\circ q
        ~ \Longleftrightarrow ~ 
        \exists!\,u \colon Q\to \Eq^\leq(a,b) \text{ with }e\circ u=q.
    \label{eq:laxeq}
    \end{equation}
\end{definition}

For a set $X$ with $a , b \colon X \to \R$, the lax equalizer is $\Eq^{\le}(a,b) = \{x \in X:a(x)\le b(x)\}$. Thus a lax commuting diagram describes an inequality, and the lax equalizer describes the locus on which it holds. 
Lax equalizers always exist in $\mathsf{Set}$ and $\mathsf{Top}$, but need not exist in $\Man$. For instance, if $0$ is a not regular value of $h$, then  $\Eq^{\le}(0,h) = \{x \in X : 0 \leq h(x)\}$ may not be an object of $\Man$, as it may have a nonsmooth boundary. Throughout, we assume that the lax equalizers we use exist in the ambient category.

Choose a posetal measurement object $R$ with distinguished point $0_R \colon 1\to R$, an autonomous comparison system $\sigma \colon R\to\F R$, a preorder on $\F R$, and write $\zero X := 0_R\circ!_X \colon X\to R$. The posetal order on $R$ and preorder on $\F R$ are distinct and related via the following essential assumption. 

\begin{assumption}[Barrier comparison lemma]
\label{ass:comparison}
The comparison system $\sigma \colon R \to \F R$, together with the order structures on $R$ and $\F R$, and the distinguished point $0_R$, satisfies the following for every time domain $I$ and map $u\colon I\to R$:
\begin{equation}
 u\circ0_I\ge0_R,\quad
 \F u\circ\unit I\ge\sigma\circ u
 \quad\Longrightarrow\quad u \ge \zero I .
 \label{eq:comparison}
\end{equation}
\end{assumption}
This assumption is the categorical version of the comparison lemma \cite{NonlinearSystems}, which is recovered when we make the choice $R = \R$ with comparison system $\dot r = \sigma(r) := - \alpha r$ for $\alpha > 0$. The classical comparison lemma gives $u(t) \geq e^{-\alpha t}u(0) \geq 0$ for any curve with $\dot u(t)\ge-\alpha u(t)$ and $u(0) \geq 0$. 

\begin{definition}[Barrier morphism and safe subobject]
For an autonomous system $f_{\rm cl}\colon B\to\F B$, a morphism $h\colon B\to R$ is a \define{barrier morphism} if:
\begin{equation}
\label{eqn:barrierdiag}
\begin{tikzcd}[row sep = 6mm]
    B
    \arrow[r, "h"]
    \arrow[d, "f_{\rm cl}"']
    &
    R
    \arrow[d, "\sigma"]
    \\
    \F B 
    \arrow[r, "\F h "']
    \arrow[ur, phantom, sloped, "\geq", pos = 0.4]
    &
    \F R 
\end{tikzcd}
 \quad \rightsquigarrow \quad 
 \F h\circ f_{\rm cl}\ge\sigma\circ h. 
\end{equation}
Its \define{safe subobject} is $s \colon S_h:=\Eq^{\le}(\zero B,h)\hookrightarrow B$.
\end{definition}

A subobject $s \colon S\hookrightarrow B$ is \define{forward invariant} if every solution $c \colon I\to B$ starting in $S$ factors through $s$. That is, $c\circ0_I=s\circ x_0$ for some $x_0 \colon 1\to S$ implies $c=s\circ\widetilde c$ for a (necessarily unique) $\widetilde c \colon I\to S$.

\begin{theorem}[Categorical Nagumo theorem]
\label{thm:barrier}
Under Assumption~\ref{ass:comparison}, a barrier morphism $h$ renders $S_h$ forward invariant.
\end{theorem}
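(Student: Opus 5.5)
We take an arbitrary solution $c\colon I\to B$ of $f_{\rm cl}$ with $c\circ 0_I=s\circ x_0$ for some $x_0\colon 1\to S_h$, and set $u:=h\circ c\colon I\to R$. The plan has three steps: verify that $u$ satisfies both hypotheses of Assumption~\ref{ass:comparison}, conclude $u\ge\zero I$, and then use the universal property \eqref{eq:laxeq} of the lax equalizer $S_h=\Eq^{\le}(\zero B,h)$ to factor $c$ through $s$.

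\emph{Initial condition.} The lax equalizer satisfies $\zero B\circ s\le h\circ s$. Precomposition with $x_0$ preserves the order (Definition~\ref{def:posetaldef}), so $\zero B\circ s\circ x_0\le h\circ s\circ x_0$. Since $\zero B\circ s\circ x_0=0_R\circ !_B\circ s\circ x_0=0_R$ (the only map $1\to 1$ is the identity) and $s\circ x_0=c\circ 0_I$, this reads $0_R\le u\circ 0_I$.

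\emph{Differential inequality.} By functoriality and the solution equation \eqref{eqn:solutiondiag},
\[
\F u\circ\unit I=\F h\circ\F c\circ\unit I=\F h\circ f_{\rm cl}\circ c .
\]
The barrier inequality \eqref{eqn:barrierdiag} gives $\F h\circ f_{\rm cl}\ge\sigma\circ h$ as maps $B\to\F R$. Precomposition with $c$ preserves the preorder on $\F R$, so $\F h\circ f_{\rm cl}\circ c\ge\sigma\circ h\circ c=\sigma\circ u$. Hence $\F u\circ\unit I\ge\sigma\circ u$. Assumption~\ref{ass:comparison} now yields $u\ge\zero I$.

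\emph{Factorization.} We have $\zero B\circ c=0_R\circ !_B\circ c=0_R\circ !_I=\zero I$, so $u\ge \zero I$ becomes $\zero B\circ c\le h\circ c$. Applying \eqref{eq:laxeq} with $q=c$ gives a unique $\widetilde c\colon I\to S_h$ with $s\circ\widetilde c=c$, which is forward invariance. Uniqueness of $\widetilde c$ also follows from $s$ being a monomorphism.

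I expect the main obstacle to be bookkeeping about which order is in play at each step, rather than any substantive difficulty. The inequality on $R$-valued maps and the one on $\F R$-valued maps come from separate structures, and each step must use order-preservation under precomposition for the correct object. The paper only states that $\F R$ carries a preorder, so I will read this as $\F R$ being a preordered object in the sense of Definition~\ref{def:posetaldef}, which makes precomposition monotone there too. The uses of terminality ($!_B\circ c=!_I$ and $!_B\circ s\circ x_0=\id_1$) are routine but should be stated explicitly.
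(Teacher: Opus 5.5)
Your proposal is correct and follows the paper's proof. You set $u=h\circ c$, paste the solution square \eqref{eqn:solutiondiag} with the lax barrier square \eqref{eqn:barrierdiag} to get $\F u\circ\unit{I}\ge\sigma\circ u$, invoke Assumption~\ref{ass:comparison}, and factor $c$ through $S_h$ via \eqref{eq:laxeq}; you are also more explicit than the paper, which leaves implicit the initial-condition check $u\circ 0_I\ge 0_R$, the identity $\zero{B}\circ c=\zero{I}$, and monotonicity of precomposition for the preorder on $\F R$.
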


\begin{proof}
Given a solution $c$ starting in $S_h$, pasting the solution diagram \eqref{eqn:solutiondiag} with the lax barrier diagram \eqref{eqn:barrierdiag} yields: 
\[
\begin{tikzcd}[column sep=12mm,row sep=6mm]
    I
    \ar[r,"c"]
    \ar[d,"\unit I"']
    & 
    B
    \ar[r,"h"]
    \ar[d,"f_{\rm cl}"']
    & 
    R
    \ar[d,"\sigma"]
    \\
    \F I
    \ar[r,"\F c"']
    &
    \F B 
    \ar[ur, phantom, sloped, "\geq", pos = 0.5] 
    \ar[r,"\F h"'] 
    &
    \F R.
\end{tikzcd}
\]
Then $u=h\circ c$ satisfies~\eqref{eq:comparison}, so $\zero I\le h\circ c$. The universal property~\eqref{eq:laxeq} gives the unique factorization of $c$ through $S_h$ implying that $S_h$ is forward invariant.
\end{proof}

\subsection{Control barrier morphisms}

Barrier morphisms can be extended to a control setting yielding \emph{control barrier morphisms}. 
Consider a control coalgebra $\mathbf P = (p \colon E \to B,f)$ and a target comparison system $\sigma \colon R \to \F R$ with $R$ posetal and $\F R$ preordered. 

\begin{definition}[Admissible locus]
For a  map $h \colon B\to R$, define the \define{admissible locus} $\A_h$ of $h$ as the lax equalizer: 
\begin{equation}
\begin{tikzcd}[cramped,column sep=4.6mm,row sep = 2mm]
	& {E } & B & R \\
	{\mathcal{A}_h := \Eq^\leq(\sigma \circ h \circ p, \mathcal{F} h \circ f)} \\
	& E & \F B & \F R
	\arrow["p", from=1-2, to=1-3]
	\arrow["h", from=1-3, to=1-4]
	\arrow["\sigma", from=1-4, to=3-4]
	\arrow["{a_h}", from=2-1, to=1-2]
	\arrow["{a_h}"', from=2-1, to=3-2]
	\arrow[from=3-2, to=1-4, phantom, sloped, "\geq", pos = 0.5]
	\arrow["f~"', from=3-2, to=3-3]
	\arrow["\F h~"', from=3-3, to=3-4]
\end{tikzcd}
 \label{eq:admissible}
\end{equation}
\end{definition}

\begin{definition}[Control barrier morphism]
\label{def:CBM}
A morphism $h\maps B\to R$ is a \define{control barrier morphism (CBM)} if:
    \begin{enumerate}
        \renewcommand{\labelenumi}{(\roman{enumi})}
    \item $\pi_h:=p\circ a_h \colon \A_h\longrightarrow B$ is an \emph{epimorphism}, 
    \item $\pi_h$ is \emph{point-surjective}\footnote{It is possible for epimorphisms to not be surjective on points. For example, the inclusion $\R^2\setminus\{(0,0)\} \hookrightarrow \R^2$ is an epimorphism in $\Man$. This is because two smooth functions on the plane that agree everywhere away from the origin must also agree at the origin.}: $\C(1,\pi_h) \colon \C(1,\A_h) \to \C(1,B)$ is surjective.
    \end{enumerate}
\end{definition}

Because $\pi_h$ is point-surjective, every state $x \colon 1\to B$ admits an admissible
point $\widetilde x \colon 1\to\A_h$ satisfying
$\pi_h\circ\widetilde x=x$. 
In categories with an underlying set structure, this condition for a CBM recovers the classical CBF
feasibility condition
\[
\forall x\in B,\qquad
\exists e_x\in E_x
\quad\text{s.t.}\quad
\F h\circ f(e_x)\geq \sigma\circ h(x).
\]
where $E_x = \{ e_x \in E : p(e_x) = x\}$. 

\begin{theorem}[Safe control theorem]
\label{thm:cbm}
    Let $h \colon B \to R$ be a CBM.  Under Assumption \ref{ass:comparison}, any section $u_h \maps B \to \A_h$ of $\pi_h = p \circ a_h \colon \A_h \to B$ yields a safe controller: $k_h := a_h \circ u_h \maps B \to E$. That is, $h$ is a barrier morphism for the closed-loop system:
    \[
    f_{\rm cl} :=  f \circ k_h  
    \maps B \to \F B
    \]
    and
    $S_h = \Eq^{\leq}( \mathbf{0}_B  , h )$ is forward invariant for $\fcl$. 
\end{theorem}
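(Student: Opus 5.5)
The plan is to reduce the statement to the Categorical Nagumo theorem (Theorem~\ref{thm:barrier}). It suffices to show that $h$ is a barrier morphism for $\fcl = f\circ k_h$, i.e.\ $\F h\circ f\circ k_h \ge \sigma\circ h$. We must also check that $\fcl$ is a genuine autonomous system, i.e.\ $\pi_B\circ \fcl = \id_B$. Once both facts are in hand, forward invariance of $S_h=\Eq^{\le}(\zero B,h)$ follows directly from Theorem~\ref{thm:barrier} under Assumption~\ref{ass:comparison}.

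First, $k_h$ is a feedback controller. Since $u_h$ is a section of $\pi_h=p\circ a_h$, we have $p\circ k_h = p\circ a_h\circ u_h = \pi_h\circ u_h=\id_B$. Combining this with the control coalgebra condition $\pi_B\circ f = p$ gives $\pi_B\circ\fcl=\pi_B\circ f\circ k_h = p\circ k_h=\id_B$. So $\fcl$ is autonomous, and its solutions are defined as in \eqref{eqn:solutiondiag}.

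Second, we verify the barrier inequality. The defining lax square \eqref{eq:admissible} of the admissible locus gives $\sigma\circ h\circ p\circ a_h \le \F h\circ f\circ a_h$ as maps $\A_h\to\F R$. Precomposition with $u_h$ preserves the preorder on $\F R$ by Definition~\ref{def:posetaldef}. Hence
\[
\sigma\circ h\circ (p\circ a_h\circ u_h) \le \F h\circ f\circ (a_h\circ u_h),
\]
which simplifies, using $p\circ a_h\circ u_h=\id_B$, to $\sigma\circ h\le \F h\circ \fcl$. This is exactly \eqref{eqn:barrierdiag}. Therefore $h$ is a barrier morphism for $\fcl$, and Theorem~\ref{thm:barrier} yields forward invariance of $S_h$.

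No step is a serious obstacle; the argument is diagrammatic bookkeeping. The only point needing care is that the inequality is taken in the preorder on $\F R$ rather than the posetal order on $R$, and that order-compatibility of precomposition holds for $\F R$. We assumed $\F R$ preordered in the sense of Definition~\ref{def:posetaldef}, so this compatibility is available. The CBM hypotheses (epimorphism and point-surjectivity) are not needed once a section $u_h$ is given; they only motivate the existence of such sections.
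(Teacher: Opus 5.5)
Your proposal is correct and follows the paper's proof: the paper also pastes the admissible-locus lax square with the section $u_h$, which amounts to precomposing and using $p\circ a_h\circ u_h=\id_B$, to obtain $\F h\circ\fcl\ge\sigma\circ h$, and then invokes Theorem~\ref{thm:barrier}. Your explicit check that $\pi_B\circ\fcl=\id_B$ and your remark that a section already makes $\pi_h$ a split epi (so the CBM hypotheses add nothing here) are accurate details that the paper leaves implicit.
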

\begin{proof}
The diagram for $\A_h$ given in \eqref{eq:admissible} and the section $u_h \colon B \to \A_h$ paste to give the barrier inequality \eqref{eqn:barrierdiag} for the closed-loop system: $\F h \circ f_{\rm cl} \geq \sigma \circ h$. It follows that $h$ is a barrier morphism for the autonomous closed-loop coalgebra $f_{\rm cl}$. 
Therefore, by Theorem \ref{thm:barrier}, $S_h$ is forward invariant. 
\end{proof}

\begin{remark}[Control Lyapunov morphisms]
\label{rem:clm} 
Let $V \colon B \to R$ be a morphism. Then the same construction as above with the inequality reversed gives the admissible locus: 
\[
a_V \colon \A_V=\Eq^{\le}(\F V\circ f,\sigma_V\circ V\circ p) \to E.
\]
Let $V$ be positive definite (as defined categorically in \cite{CLT1,CLT2})
and $\pi_V := p \circ a_V$ be a point surjective epi, then $V$ is a \define{control Lyapunov morphism (CLM)}\footnote{The purpose of this paper is not to develop the theory of CLM, although it is quite interesting as the ``dual'' to control barrier morphisms. }. It is straightforward to prove that any section $u_V \colon B \to \A_V$ yields a stabilizing controller $a_V \circ u_V$. Namely, for the closed loop system $\fcl := f \circ a_V \circ u_V \colon B \to \F B$ the Lyapunov inequality is satisfied: $\F V\circ\fcl\le\sigma_V\circ V$. We will leverage this reverse inequality in the context of certificates for the tracking controller. 
\end{remark}

\begin{example}[Continuous-time CBFs]
Returning to Example \ref{ex:continuous}, 
pick the comparison object $R = \R$ with the canonical order $\geq$ and equip $T R \cong \R^2$ with the order: $(r,\dot{r}) \geq (r',\dot{r}')$ iff $r = r'$ and $\dot{r} \geq \dot{r}'$. Define the comparison system $\sigma_h \colon \R \to T \R$ by $\sigma_h(r)=(r,-\alpha(r))$, for $\alpha \in \mathcal{K}_{\infty}^e$ extended class $\mathcal{K}_{\infty}$. 

For $h \colon M\to\R$, the safe set is
\[
S_h =\{x \in M  \ : \ h(x)\geq0\}
\]
and the admissible locus is given by: 
\[
\A_h
=
\left\{
(x,u) \in M \times U \ : \
D h(x)f(x,u)
\geq-\alpha(h(x))
\right\}.
\]
Point-surjectivity of $\pi_h \colon \A_h\to M$ is the usual CBF condition: 
\[
\forall x\in M,\qquad
\exists u\in U
\quad\text{s.t.}\quad
D h(x)  f(x,u)\geq -\alpha( h(x)). 
\]
A section $u_h(x) = (x,\kappa_h(x))$ is a feedback controller such that the barrier condition is satisfied: 
\[
D h(x) f(x,\kappa_h(x)) \geq - \alpha(h(x)) 
\]
and therefore $S_h$ is forward invariant for the closed-loop system: $\fcl(x) = f(x,\kappa_h(x))$. We have thus captured the classic CBF results categorically. 
\end{example}

\section{Composition of Systems and Certificates}
\label{sec:composition}

The key idea, inspired by \cite{AmesNESC2006,Tabuada2008}, is that \emph{composition can be described by a pullback}.  
We now make this idea precise in the context of control coalgebras and CBMs.

\subsection{Composition over a mediator}

Consider the case of two control coalgebras $\mathbf{P}_1$ and $\mathbf{P}_2$  (this framework can be extended to any number of systems via the limit). The essential concept is that control systems must communicate through a \define{mediator} $\mathbf{M}$, resulting in a \emph{cospan} of morphisms in $\Ctrl_{\F}(\C)$: 
\begin{equation}
 \mathbf P_1\xrightarrow{\bmap{\phi}_1}\mathbf M\xleftarrow{\bmap{\phi}_2}\mathbf P_2,
 \qquad\mathbf M=(p_0 \colon E_0\to B_0,f_0).
 \label{eq:cospan}
\end{equation}
The mediator specifies which state and input information must agree. The cospan defines constraints between states and inputs that are imposed by the interconnection between two systems. Composition enforces this agreement via the pullback.

\begin{assumption}[Existence of pullbacks]
\label{ass:pullback}
The pullbacks:
\[
\begin{tikzcd}[column sep=5mm,row sep=4mm]
    E := E_1\times_{E_0}E_2 
    \ar[r,"p_1^E"]
    \ar[d,"p_2^E"']
    \ar[dr,phantom,"\lrcorner",very near start]
    &
    E_1
    \ar[d,"\phi_1^E"]
    \\
    E_2
    \ar[r,"\phi_2^E"']
    &
    E_0
\end{tikzcd}
\quad
\begin{tikzcd}[column sep=5mm,row sep=4mm]
 B := B_1\times_{B_0}B_2 \ar[r,"p_1^B"]\ar[d,"p_2^B"']\ar[dr,phantom,"\lrcorner",very near start]&B_1\ar[d,"\phi_1^B"]\\
 B_2\ar[r,"\phi_2^B"']&B_0.
\end{tikzcd}
\]
  exist, and $\F$ preserves the state pullback: 
  \[
  \F B = \F(B_1\times_{B_0}B_2) \cong \F(B_1) \times_{\F(B_0)} \F(B_2).
  \]
\end{assumption}

For smooth manifolds, transverse pullbacks are sufficient and are preserved by $T$. Arbitrary pullbacks need not exist.

\begin{proposition}[Composition of control coalgebras]
\label{prop:physical}
Under Assumption~\ref{ass:pullback}, there is a unique control coalgebra:
\[
\mathbf P:= \mathbf P_1\times_{\mathbf M}\mathbf P_2 = (p \colon E\to B,f)
\]
given by the pullback in $\Ctrl_{\F}(\C)$: 
\[
\begin{tikzcd}[column sep=11mm,row sep=4mm]
    \mathbf P
    \ar[r, "\bmap{p}_1"]
    \ar[d, "\bmap{p}_2"']
    \ar[dr, phantom, "\lrcorner", very near start]
    &
    \mathbf P_1
    \ar[d, "\bmap{\phi}_1"]
    \\
    \mathbf P_2
    \ar[r, "\bmap{\phi}_2"']
    &
    \mathbf M.
\end{tikzcd}
\]
with projections $\bmap{p}_i = (p_i^B,p_i^E)$ satisfying:
\begin{equation}
 p_i^B\circ p=p_i\circ p_i^E,\qquad
 \F p_i^B\circ f=f_i\circ p_i^E,\quad i=1,2.
 \label{eq:induced-dynamics}
\end{equation}
\end{proposition}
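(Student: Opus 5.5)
The plan is to build the composite coalgebra componentwise from the two pullbacks of Assumption~\ref{ass:pullback}, and then check the universal property in $\Ctrl_{\F}(\C)$ directly.

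\emph{Step 1: the bundle map $p$.} The maps $p_1\circ p_1^E \colon E\to B_1$ and $p_2\circ p_2^E\colon E\to B_2$ satisfy
\[
\phi_1^B\circ p_1\circ p_1^E = p_0\circ\phi_1^E\circ p_1^E = p_0\circ\phi_2^E\circ p_2^E = \phi_2^B\circ p_2\circ p_2^E .
\]
This uses the first condition of \eqref{eq:strict} for $\bmap\phi_1,\bmap\phi_2$ and commutativity of the $E$-pullback. The universal property of $B$ then gives a unique $p\colon E\to B$ with $p_i^B\circ p = p_i\circ p_i^E$.

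\emph{Step 2: the dynamics $f$.} We use the same argument one level up. The maps $f_i\circ p_i^E\colon E\to\F B_i$ satisfy
\[
\F\phi_1^B\circ f_1\circ p_1^E=f_0\circ\phi_1^E\circ p_1^E=f_0\circ\phi_2^E\circ p_2^E=\F\phi_2^B\circ f_2\circ p_2^E ,
\]
by the second condition of \eqref{eq:strict}. Since $\F$ preserves the state pullback, $\F B$ with legs $\F p_i^B$ is a pullback of $\F\phi_1^B,\F\phi_2^B$. This yields a unique $f\colon E\to\F B$ with $\F p_i^B\circ f=f_i\circ p_i^E$, so \eqref{eq:induced-dynamics} holds by construction.

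\emph{Step 3: $\mathbf P$ is a control coalgebra.} We must show $\pi_B\circ f=p$. Since $(p_1^B,p_2^B)$ are jointly monic, it suffices to check after composing with each $p_i^B$. Naturality of $\pi$ gives
\[
p_i^B\circ\pi_B\circ f=\pi_{B_i}\circ\F p_i^B\circ f=\pi_{B_i}\circ f_i\circ p_i^E=p_i\circ p_i^E=p_i^B\circ p .
\]
With this, the equations \eqref{eq:induced-dynamics} say exactly that $\bmap p_i=(p_i^B,p_i^E)$ are morphisms in $\Ctrl_{\F}(\C)$. The square $\bmap\phi_1\circ\bmap p_1=\bmap\phi_2\circ\bmap p_2$ commutes componentwise because both underlying squares are pullback squares.

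\emph{Step 4: universal property.} Let $\mathbf Q=(q\colon E'\to B',g)$ come with morphisms $\bmap\psi_i\colon\mathbf Q\to\mathbf P_i$ satisfying $\bmap\phi_1\circ\bmap\psi_1=\bmap\phi_2\circ\bmap\psi_2$. The two pullbacks give unique $\psi^B\colon B'\to B$ and $\psi^E\colon E'\to E$ compatible with the legs. It remains to verify the morphism conditions $p\circ\psi^E=\psi^B\circ q$ and $f\circ\psi^E=\F\psi^B\circ g$. Each is checked by postcomposing with the jointly monic legs: $p_i^B$ for the first, and $\F p_i^B$ for the second, again using that $\F B$ is a pullback. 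Uniqueness of the factorization follows from uniqueness in each underlying pullback. Uniqueness of $\mathbf P$ itself is then uniqueness of pullbacks up to unique isomorphism, which is how I would read ``unique'' in the statement.

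The only step needing care is Steps 2 and 4 for $f$. There we must use the hypothesis that $\F$ preserves the state pullback, specifically that the canonical comparison $\F B\to \F B_1\times_{\F B_0}\F B_2$ is an isomorphism. Without it, $f$ could not be defined or characterized uniquely. The rest is routine diagram chasing.
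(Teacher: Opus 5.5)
Your proof is correct and follows the paper's route: you obtain $f$ from the cone $f_i\circ p_i^E$ over $\F B_1\to\F B_0\leftarrow\F B_2$ using preservation of the state pullback, then verify the coalgebra condition and the universal property componentwise. The only difference is cosmetic. The paper simply sets $p:=\pi_B\circ f$, while you build $p$ from the $B$-pullback and use naturality of $\pi$ to show $\pi_B\circ f=p$; that computation, together with your Step~4, is exactly what the paper's ``automatically'' and ``standard arguments'' leave implicit.
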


\begin{remark}
\label{rmk:pullback}
The universal property of the pullback is essential to understanding properties of the composed system. It is summarized by the following system-level diagrams: 
\[
\begin{tikzcd}[column sep=10mm]
    \mathbf Z 
    \ar[r,"\bmap z_1"]
    \ar[d,"\bmap z_2"']
    &
    \mathbf P_1
    \ar[d,"\bmap \phi_1"]
    \\
    \mathbf P_2
    \ar[r,"\bmap \phi_2"']
    &
    \mathbf M.
\end{tikzcd}
\ \implies 
\begin{tikzcd}[column sep = scriptsize, row sep = 2mm]
    &&&
    \mathbf P_1
    \arrow[dr, "\bmap \phi_1"]
    \\
    \mathbf Z
    \arrow[urrr, bend left, "\bmap z_1"description]
    \arrow[rr, "\exists!"description, dashed]
    \arrow[drrr, bend right, "\bmap  z_2"description]
    &&
    \mathbf P
    \arrow[ur, "\bmap p_1"]
    \arrow[dr, "\bmap p_2"']
    \arrow[rr, phantom, "\lrcorner"{rotate = 45}, pos = 0.2]
    &&
    \mathbf M
    \\&&&
    \mathbf P_2
    \arrow[ur, "\bmap \phi_2"']
\end{tikzcd}
\]
Hence, for a system $\mathbf Z$ with compatible maps to the subsystems $\mathbf P_1$ and $\mathbf P_2$, there is a unique induced map to $\mathbf P$. This map will be essential in extending certificates (e.g., barrier morphisms) from subsystems to the composed closed-loop system. 
\end{remark}

\begin{proof}
The following diagram commutes.
\[
\begin{tikzcd}[row sep = 4mm]
    &
    E_1
    \arrow[r, "f_1"]
    \arrow[dr, "\phi_1^E"']
    &
    \F B_1
    \arrow[dr, "\phi_1^B"]
    \\
    E_1 \times_{E_0} E_2
    \arrow[ur, "p_1^E"]
    \arrow[dr, "p_2^E"']
    &&
    E_0
    \arrow[r, "f_0"description]
    &
    \F B_0
    \\&
    E_2
    \arrow[ur, "\F\phi_2^E"]
    \arrow[r, "f_2"']
    &
    \F B_2
    \arrow[ur, "\F\phi_2^B"']
\end{tikzcd}
\]
So the maps $f_i \circ p_i^E \colon E \to \F B_i$, $i=1,2$, form a cone over $\mathcal F B_1\to\mathcal F B_0\leftarrow\mathcal F B_2$. Since $\mathcal F$ preserves the state pullback, universal property of pullback gives a unique $f \colon E\to\mathcal F B$ such that the following diagram commutes: 
\[
\begin{tikzcd}[column sep = small, row sep = 4mm]
    &
    E_1
    \arrow[r, "f_1"]
    &
    \F B_1
    \arrow[dr, "\F\phi_1^B"]
    \\
    E_1 \times_{E_0} E_2
    \arrow[ur, "p_1^E"]
    \arrow[r, "\exists!f", dashed]
    \arrow[dr, "p_2^E"']
    &
    \F B_1 \times_{\F B_0} \F B_2
    \arrow[ur, ""]
    \arrow[rr, phantom, "\lrcorner"{rotate = 45}, pos = 0.2]
    \arrow[dr, ""']
    &&
    \F B_0
    \\&
    E_2
    \arrow[r, "f_2"']
    &
    \F B_2
    \arrow[ur, "\F\phi_2^B"']
\end{tikzcd}
\]
Setting $p := \pi_B \circ f$ makes $(p,f)$ into a control coalgebra automatically. By standard arguments, $\mathbf P$ satisfies the universal property of the desired pullback in $\Ctrl_{\F}(\C)$. 
\end{proof}

\begin{example}[Continuous-time interconnection]
\label{ex:interconnection}
Consider again continuous-time systems cf. Example \ref{ex:continuous}. 
Namely, for $\F=T$, let $\mathbf P_i$ have $E_i=M_i\times U_i$ and dynamics in coordinates $\dot x_i=f_i(x_i,u_i)$. A \define{stateless port mediator} is
\[
 \mathbf M_W=(p_0 := !_W \colon W\to1, f_0 := !_{W}),\qquad  T1\cong1 = \{*\}
\]
where $f_0(w) = !_{W}(w) \equiv *$. 
Any map $\theta_i \colon E_i\to W$ defines a morphism of control systems $\bmap{\theta}_i := (!_{M_i},\theta_i) \colon \mathbf P_i\to\mathbf M_W$. Consequently,
\begin{equation}
    B=M_1\times M_2,\quad E=(M_1\times U_1)\times_W(M_2\times U_2).
\label{eq:ports}
\end{equation}

\vspace{0.2cm}
\noindent \underline{Cascade interconnection:}
Take $W=U_1$ wherein $p_0 \colon U_1 \to 1$, $\theta_1(x_1,u_1)=u_1$, and $\theta_2(x_2,u_2)=y_2(x_2)$ for any $y_2 \colon M_2 \to U_1$. Then $u_1=y_2(x_2)$, so $E\cong M_1\times M_2\times U_2$ and: 
\begin{equation}
 \dot x_1=f_1(x_1,y_2(x_2)),\qquad \dot x_2=f_2(x_2,u_2).
 \label{eq:cascade}
\end{equation}
The remaining external input is $u_2$, resulting in the cascade interconnection: 
\[
\begin{tikzpicture}[>=Stealth,sys/.style={draw,rounded corners=1pt,minimum width=12mm,minimum height=7mm},font=\small]
 \node[sys] (p2) at (0,0) {$\mathbf P_2$};
 \node[sys] (p1) at (3.8,0) {$\mathbf P_1$};
 \draw[->] (-1.4,0)--node[above] {$u_2$}(p2);
 \draw[->] (p2)--node[above] {$y_2(x_2)=u_1$}(p1);
\end{tikzpicture}
\]

\vspace{0.2cm}
\noindent \underline{Feedback interconnection:}
Let $W=U_1\times U_2$, $p_0(u_1,u_2) \equiv * $, and pick outputs $y_1 \colon M_1\to U_2$, $y_2 \colon M_2\to U_1$. With
\[
 \theta_1(x_1,u_1)=(u_1,y_1(x_1)),\quad
 \theta_2(x_2,u_2)=(y_2(x_2),u_2),
\]
the pullback enforces $u_1=y_2(x_2)$ and $u_2=y_1(x_1)$. Hence $E \cong M_1 \times M_2$ and the composition is autonomous:
\begin{equation}
 \dot x_1=f_1(x_1,y_2(x_2)),\qquad
 \dot x_2=f_2(x_2,y_1(x_1)).
 \label{eq:feedback}
\end{equation}
This yields the familiar feedback interconnection diagram:
\[
\begin{tikzpicture}[>=Stealth,sys/.style={draw,rounded corners=1pt,minimum width=14mm,minimum height=7mm},font=\small]
 \node[sys] (p1) at (0,0) {$\mathbf P_1$};
 \node[sys] (p2) at (4.4,0) {$\mathbf P_2$};
 \draw[->] (p1.north)--++(0,.55)-|node[pos=.25,above] {$y_1(x_1)=u_2$}(p2.north);
 \draw[->] (p2.south)--++(0,-.55)-|node[pos=.25,below] {$y_2(x_2)=u_1$}(p1.south);
\end{tikzpicture}
\]
\end{example}

\subsection{Comparison systems and compatible certificates}

To compose guarantees for systems framed as certificates, e.g., control barrier morphisms, it is necessary to compose the target comparison systems for the layered system architecture: $\mathbf{P}_1 \rightarrow \mathbf{M} \leftarrow \mathbf{P}_2$. 
Choose autonomous targets $\mathbf R_i=(R_i,\sigma_i)$, $i = 0$ (for $\mathbf{M}$) and $i = 1, 2$ (for $\mathbf{P}_i$), and connecting maps $\bmap n_i = (n_i,n_i)$: 
\begin{equation}
    \mathbf R_1\xrightarrow{\bmap n_1}\mathbf R_0\xleftarrow{\bmap n_2}\mathbf R_2,
    \qquad \F n_i\circ\sigma_i=\sigma_0\circ n_i.
\end{equation}
Assume $R=R_1\times_{R_0}R_2$ exists and is preserved by $\F$. Proposition~\ref{prop:physical} gives the autonomous pullback: 
\begin{eqnarray}
\label{eqn:autonmoustargetcomp}
\begin{tikzcd}[column sep=11mm,row sep=6mm]
    \mathbf \QT= (R,\sigma) := \mathbf{R}_1 \times_{\mathbf{R}_0} \mathbf{R}_2 
    \ar[r,"\bmap p_1^R"]
    \ar[d,"\bmap p_2^R"']
    \ar[dr,phantom,"\lrcorner",very near start]
    &
    \mathbf R_1
    \ar[d,"\bmap n_1"]\\
    \mathbf R_2
    \ar[r,"\bmap n_2"']
    &
    \mathbf R_0
\end{tikzcd}
\end{eqnarray}
with projections $\bmap p_i^R = (p_i^R,p_i^R)$ where $p_i^R \colon R \to R_i$. Give each $\F R_i$, $i=1,2$, a preorder $\ge_i$ and equip $\F R$ with the \define{projection-induced preorder}:
\begin{equation}
    \xi\ge\eta\quad\Longleftrightarrow\quad
    \F p_i^R\circ\xi\ge_i\F p_i^R\circ\eta,
    \quad i=1,2,
    \label{eq:induced-order}
\end{equation}
for every pair $\xi,\eta \colon X\to\F R$. Precomposition preserves this preorder. The two component orders need not compare the same quantities or use the same direction of scalar inequality. 
For example, if we are considering a control barrier function $h \colon M_1 \to R_1 = \R$ and a control Lyapunov function $V \colon M_2 \to R_2 = \R_{\geq 0}$, the corresponding preorders on $T\R$ and $T\R_{\geq 0}$ would be $\ge_1 \,=\, \ge$ and $\ge_2 \,=\, \le$ (since Lyapunov and barrier functions have flipped orders). If the target mediator is terminal, $\mathbf R_0 = 1$, this induces a mixed componentwise order on the composed target.

\subsection{Main Result} 

We can now present the main result on the composition of certificates, i.e., control barrier morphisms. For this theorem, under Assumption \ref{ass:pullback}, let
\begin{eqnarray}
\label{eqn:PQassumption}
 \begin{aligned}
 \mathbf P&=(p \colon E\to B,f)
   =\mathbf P_1\times_{\mathbf M}\mathbf P_2,\\
 \mathbf Q&=(R,\sigma)
   =\mathbf R_1\times_{\mathbf R_0}\mathbf R_2
 \end{aligned}
\end{eqnarray}
\newpage
\noindent be the composed control system from Proposition \ref{prop:physical} and the autonomous target system \eqref{eqn:autonmoustargetcomp}, respectively, where $R$ is posetal and $\F R$ has the projection-induced preorder \eqref{eq:induced-order}.

\begin{theorem}[Composition of control barrier morphisms]
\label{thm:composition}
Let $\mathbf{P}$ be the composed control system and $\mathbf{Q}$ be the autonomous target as in \eqref{eqn:PQassumption}. Suppose there are CBMs $\chi_i \colon B\to R_i$, $i = 1, 2$, for $\mathbf{P}$ with admissible loci $\A_{\chi_i}$ such that the diagram below commutes: 
\begin{equation}
\label{eq:compatible}
\begin{tikzcd}[row sep=4mm]
    B
    \arrow[r, "\chi_1"]
    \arrow[d, "\chi_2"']
    &
    R_1
    \arrow[d, "n_1"]
    \\
    R_2
    \arrow[r, "n_2"']
    &
    R_0
\end{tikzcd} 
\quad \rightsquigarrow \quad 
 n_1\circ\chi_1=n_2\circ\chi_2,
\end{equation}
and the following pullback 
exists, inducing a map $a \colon \mathcal A\to E$:
\[
\begin{tikzcd}[column sep=8mm, row sep=4mm]
    \A := \A_{\chi_1}\times_E \A_{\chi_2}
    \ar[r,"p_1^\A"]
    \ar[d,"p_2^\A"']
    \ar[dr,phantom,"\lrcorner",very near start]
    &
    \A_{\chi_1} \ar[d,hook,"a_{\chi_1}"]
    \\
    \A_{\chi_2} \ar[r,hook,"a_{\chi_2}"']
    &
    E
\end{tikzcd}
\quad \rightsquigarrow \quad 
\begin{aligned}
    a:= & a_{\chi_1}\circ p_1^\A \\
    = & a_{\chi_2}\circ p_2^\A
\end{aligned}
\]  
If $\pi_A := p \circ a \colon \A \to B$ is an epi and point-surjective,
there exists a unique CBM $\chi \colon B\to R$ with
$p_i^R\circ\chi=\chi_i$, $i=1,2$, satisfying
\begin{equation}
\begin{tikzcd}[cramped,column sep=4.6mm, row sep = 2mm]
	& {E } & B & R \\
	{\A_\chi\cong\mathcal A} \\
	& E & \F B & \F R
	\arrow["p", from=1-2, to=1-3]
	\arrow["\chi", from=1-3, to=1-4]
	\arrow["\sigma", from=1-4, to=3-4]
	\arrow["{a}", from=2-1, to=1-2]
	\arrow["{a}"', from=2-1, to=3-2]
	\arrow[from=3-2, to=1-4, phantom, sloped, "\geq", pos = 0.5]
	\arrow["f~"', from=3-2, to=3-3]
	\arrow["\F \chi~"', from=3-3, to=3-4]
    \arrow["\pi_{\A}", bend left=60, from=2-1, to=1-3]
\end{tikzcd}
\rightsquigarrow  \F\chi\circ f\circ a
 \ge \sigma\circ\chi\circ\pi_{\mathcal A}.
 \label{eq:eq:composed-admissible-cbm}
\end{equation}
\end{theorem}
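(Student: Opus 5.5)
The plan is to build $\chi$ from the universal property of the target pullback $R=R_1\times_{R_0}R_2$, and then to identify its admissible locus with $\mathcal A$ using the projection-induced preorder \eqref{eq:induced-order}.

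\textbf{Step 1: construct $\chi$.} The compatibility square \eqref{eq:compatible} says that $\chi_1,\chi_2$ form a cone over $R_1\xrightarrow{n_1}R_0\xleftarrow{n_2}R_2$. The universal property of the pullback therefore gives a unique $\chi\colon B\to R$ with $p_i^R\circ\chi=\chi_i$. This settles uniqueness at once: any CBM with these projections must equal $\chi$.

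\textbf{Step 2: show $\A_\chi\cong\mathcal A$.} I will use the universal properties \eqref{eq:laxeq} of both objects. The key preliminary facts are that $\F R\cong\F R_1\times_{\F R_0}\F R_2$ with projections $\F p_i^R$, and that $\bmap p_i^R$ are morphisms of autonomous systems. The latter gives $\F p_i^R\circ\sigma=\sigma_i\circ p_i^R$. Combining this with functoriality, for any $q\colon Q\to E$ we get
\[
\F p_i^R\circ\F\chi\circ f\circ q=\F\chi_i\circ f\circ q,
\]
\[
\F p_i^R\circ\sigma\circ\chi\circ p\circ q=\sigma_i\circ\chi_i\circ p\circ q.
\]
By the definition of the induced preorder, $\F\chi\circ f\circ q\ge\sigma\circ\chi\circ p\circ q$ holds if and only if $\F\chi_i\circ f\circ q\ge_i\sigma_i\circ\chi_i\circ p\circ q$ for $i=1,2$. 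By \eqref{eq:laxeq}, the single condition says $q$ factors uniquely through $a_\chi$. The pair of conditions says $q$ factors uniquely through both $a_{\chi_1}$ and $a_{\chi_2}$.

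The lax equalizer maps $a_{\chi_i}$ are monic, since factorizations are unique. A compatible pair of factorizations through both of them is therefore the same thing as a unique factorization through the pullback $\mathcal A$ via $a$. Hence $(\A_\chi,a_\chi)$ and $(\mathcal A,a)$ represent the same subfunctor of $\C(-,E)$, and Yoneda gives an isomorphism $\A_\chi\cong\mathcal A$ over $E$. Under this isomorphism $\pi_\chi=p\circ a_\chi$ corresponds to $\pi_{\mathcal A}=p\circ a$. Taking $q=a$ also yields the displayed inequality \eqref{eq:eq:composed-admissible-cbm} directly.

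\textbf{Step 3: conclude $\chi$ is a CBM.} Being an epimorphism and being point-surjective are both invariant under composition with an isomorphism. The hypothesis on $\pi_{\mathcal A}$ therefore transfers to $\pi_\chi$, so $\chi$ is a CBM in the sense of Definition~\ref{def:CBM}.

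\textbf{Main obstacle.} The delicate point is Step 2. It needs the monicity of lax equalizer maps, which follows from the uniqueness clause in \eqref{eq:laxeq}. It also needs a careful check that $\F p_i^R$ are exactly the projections in the assumed isomorphism $\F R\cong\F R_1\times_{\F R_0}\F R_2$. With both in place, the induced order really does split into the two component conditions.

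Note that $\chi$ need not have $R$ posetal for this argument; posetality is only used later, when Theorem~\ref{thm:cbm} is applied.
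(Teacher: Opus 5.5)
Your proposal is correct and follows the paper's proof essentially step for step. You obtain $\chi$ from the universal property of $R_1\times_{R_0}R_2$. You then use a chain of equivalences (strictness of $\bmap p_i^R$ with the induced preorder, the lax-equalizer property of each $\A_{\chi_i}$, then the pullback $\A$) to show that $(\A,a)$ has the universal property of $\A_\chi$. Your monicity and Yoneda details are what the paper leaves implicit.

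One small point: the check you flag as the main obstacle, that $\F p_i^R$ are the projections of $\F R\cong\F R_1\times_{\F R_0}\F R_2$, is not needed in this argument. The order \eqref{eq:induced-order} is defined directly in terms of $\F p_i^R$. Preservation of the pullback by $\F$ is only used beforehand, to construct $\sigma$.
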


\begin{remark}
This theorem can be visualized by the diagram: 
\[
\begin{tikzcd}[column sep = scriptsize, row sep = 2mm]
    &&&
    R_1
    \arrow[dr, "n_1"]
    \\
    B
    \arrow[urrr, bend left, "\chi_1"description]
    \arrow[rr, "\exists!\chi"description, dashed]
    \arrow[drrr, bend right, "\chi_2"description]
    &&
    R
    \arrow[ur, "p^R_1"]
    \arrow[dr, "p^R_2"']
    \arrow[rr, phantom, "\lrcorner"{rotate = 45}, pos = 0.2]
    &&
    R_0
    \\&&&
    R_2
    \arrow[ur, "n_2"']
\end{tikzcd}
\]
where the $\chi_i$ are state maps that lax commute on the dynamics according to \eqref{eq:compatible}, and the remaining arrows strictly commute. 
Therefore, certificates (CBMs) from $\mathbf{P}$ to each of the targets $\mathbf{R}_i$ induce a unique certificate to the composed target. 
Additionally, if there are component CBMs $\bar\chi_i \colon B_i\to R_i$ associated with each of the subsystems $\mathbf{P}_i$, Theorem \ref{thm:composition} can be applied with $\chi_i = \bar \chi_i \circ p_i^B$ assuming the conditions of the theorem hold. 
The existence of sections of $\pi_{\chi_1}$ and $\pi_{\chi_2}$ does not imply the existence of a section of $\pi_{\A}$.
\end{remark}

\begin{proof}
The compatibility of $\chi_i$ in \eqref{eq:compatible} gives a unique map $\chi \colon B\to R$ with
$p_i^R\circ\chi=\chi_i$.
For any $q \colon X\to E$, strictness of the target projections
and the induced preorder give:
\[
 \begin{aligned}
 &\F\chi\circ f\circ q
       \ge\sigma\circ\chi\circ p\circ q\\
 &\quad\Longleftrightarrow\quad
 \F\chi_i\circ f\circ q
       \ge_i\sigma_i\circ\chi_i\circ p\circ q,
       \quad i=1,2. \\
 &\quad\Longleftrightarrow\quad q \textrm{ factors through both } a_{\chi_1},a_{\chi_2},\\
 &\quad\Longleftrightarrow\quad q \textrm{ factors through } a. 
 \end{aligned}
\]
\newpage
Here, the first equivalence follows from the induced preorder \eqref{eq:induced-order}, the identities $p_i^R \circ \chi = \chi_i$, and strictness of the target projections $\bmap p_i^R$. The remaining equivalences follow from the universal properties of the admissible loci and their pullback. Therefore, $\A$ satisfies the universal property of the admissible locus of $\chi$, and thus $\A_{\chi} \cong \A$ over $E$ and $\pi_{\chi}$ can be identified with $\pi_{\A}$. Taking $q = a$ yields \eqref{eq:eq:composed-admissible-cbm} and $\chi$ is a control barrier morphism. 
\end{proof}

\begin{corollary}[Safety by certificate composition]
\label{cor:composition}
    Suppose that there is a composed target system $\QT = (R,\sigma)$, where $R$ is posetal with distinguished zero $0_R \colon 1 \to R$ and $\F R$ has the induced order~\eqref{eq:induced-order}, such that Assumption~\ref{ass:comparison} is satisfied. Under Theorem~\ref{thm:composition}, every section $u \colon B \to \A$ renders
    \begin{equation}
     S_\chi=\Eq^{\le}(\zero B,\chi)\hookrightarrow B
     \label{eq:composed-safe}
    \end{equation}
    forward invariant for $\fcl=f\circ a \circ u \colon B \to \F B$.
\end{corollary}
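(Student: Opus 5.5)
The plan is to reduce Corollary~\ref{cor:composition} to the Safe control theorem (Theorem~\ref{thm:cbm}) applied to the composed CBM $\chi \colon B \to R$ produced by Theorem~\ref{thm:composition}. Theorem~\ref{thm:composition} already supplies three things: a unique $\chi$ with $p_i^R\circ\chi=\chi_i$; an identification $\A_\chi\cong\A$ over $E$, under which $a_\chi$ corresponds to $a$ and $\pi_\chi$ to $\pi_\A=p\circ a$; and the inequality $\F\chi\circ f\circ a\ge\sigma\circ\chi\circ\pi_\A$ in the projection-induced preorder on $\F R$. So the only remaining work is to transport the given section $u$ of $\pi_\A$ to a section of $\pi_\chi$, and to check that the hypotheses of Theorem~\ref{thm:cbm} hold for the target $\QT=(R,\sigma)$.

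The steps, in order, are as follows.

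\emph{Step 1.} Let $\theta\colon\A\to\A_\chi$ be the isomorphism over $E$, so that $a_\chi\circ\theta=a$. Set $u_\chi:=\theta\circ u\colon B\to\A_\chi$. Then
\[
\pi_\chi\circ u_\chi=p\circ a_\chi\circ\theta\circ u=p\circ a\circ u=\pi_\A\circ u=\id_B,
\]
so $u_\chi$ is a section of $\pi_\chi$. The associated controller is
\[
k_\chi=a_\chi\circ u_\chi=a\circ u,
\]
and therefore the closed loop $f\circ k_\chi$ is exactly $\fcl=f\circ a\circ u$.

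\emph{Step 2.} Verify the ambient hypotheses of Theorem~\ref{thm:cbm}. By assumption of the corollary, $R$ is posetal with distinguished point $0_R$. The space $\F R$ carries the induced preorder~\eqref{eq:induced-order}, and precomposition preserves it, as noted after~\eqref{eq:induced-order}. Assumption~\ref{ass:comparison} holds for $(R,\sigma,0_R)$ by hypothesis. Since $\chi$ is a CBM by Theorem~\ref{thm:composition}, Theorem~\ref{thm:cbm} applies.

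\emph{Step 3.} Conclude. Theorem~\ref{thm:cbm} shows that $\chi$ is a barrier morphism for $\fcl$. Concretely, pasting $u$ onto~\eqref{eq:eq:composed-admissible-cbm} and using order-preservation of precomposition gives
\[
\F\chi\circ\fcl\ge\sigma\circ\chi\circ\pi_\A\circ u=\sigma\circ\chi .
\]
Theorem~\ref{thm:barrier} then gives forward invariance of $S_\chi=\Eq^{\le}(\zero B,\chi)$. This lax equalizer is assumed to exist, as the paper stipulates throughout.

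The main (mild) obstacle is the bookkeeping in Step 1. Theorem~\ref{thm:cbm} is phrased in terms of sections of $\pi_\chi$ on the canonical admissible locus $\A_\chi$, whereas the corollary is phrased with sections of $\pi_\A$. I would handle this by noting explicitly that the isomorphism from Theorem~\ref{thm:composition} commutes with the maps to $E$, which is what makes $k_\chi=a\circ u$. Alternatively, the issue can be bypassed entirely by doing Step 3 directly from~\eqref{eq:eq:composed-admissible-cbm}, without passing through $\A_\chi$.
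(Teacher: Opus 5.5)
Your proposal is correct and follows the paper's route; the paper gives no separate proof of the corollary, and it follows exactly as you describe. You combine the identification $\A_\chi\cong\A$ over $E$ from Theorem~\ref{thm:composition} with the Safe control theorem (Theorem~\ref{thm:cbm}), so that precomposing the composed admissible inequality with the section $u$ gives the barrier inequality for $\fcl=f\circ a\circ u$, and then Theorem~\ref{thm:barrier} applies; your explicit transport of $u$ along $\theta$ is just the bookkeeping the paper leaves unstated.
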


Note that if $R_+ := \Eq^{\le}(\zero R, \id_R)$ exists, the universal property of the lax equalizer and the pullback imply that:
\begin{equation}
    S_\chi \cong B \times_R R_+ .
    \label{eq:safe-pullback}
\end{equation}
Indeed, $q \colon X \to B$ satisfies $\zero X \le \chi \circ q$ exactly when $\chi \circ q$ factors through $R_+$. Thus the standard safe subobject is the inverse image of target positivity: $S_\chi = \chi^{-1}(R_+)$.

\begin{remark}[General composition]
    Composition as the pullback extends to any number of systems. In this case, a graph $\Gamma$ describes the interconnection structure, with a system $\mathbf{P}_v$ at each node $v$ and a mediator $\mathbf{M}_e$ at each edge $e$, giving the cospan
    \[
    \mathbf{P}_{\mathrm{source}(e)} \rightarrow \mathbf{M}_e \leftarrow \mathbf{P}_{\mathrm{target}(e)},
    \]
    and the composed system is the limit of the resulting diagram, provided this limit exists and $\F$ preserves the corresponding limit of state spaces (cf.~\cite{AmesNESC2006} for tagged systems). For example, the classic three-layer control architecture, consisting of a planning layer, a trajectory generation layer, and a real-time control layer, composes into a single control coalgebra
    \[
     \mathbf{P} := \lim\left( \mathbf P_1\longrightarrow\mathbf M_{12}\longleftarrow
     \mathbf P_2\longrightarrow\mathbf M_{23}\longleftarrow\mathbf P_3 \right).
    \]
    We expect Theorem~\ref{thm:composition} to extend to this case: given CBMs $\chi_v \colon B \to R_v$ for $\mathbf{P}$, one for each node, that are compatible over a diagram of target systems of the same shape, the universal property of the limit induces a CBM on the composed system, provided the joint admissible locus projects onto $B$ by a point-surjective epi.
\end{remark}

\section{Layered Safety for Reduced-Order Models}
\label{sec:application}

We return to the motivation given in Section \ref{sec:background}: layered safety architectures leveraging RoMs to guarantee safety on FoMs (see Fig. \ref{fig:architecture} for an illustration of this architecture). The goal of this section is to apply the categorical results on composition to this specific case. We show that the result in \eqref{eq:classical-result} follows systematically as a result of the categorical framework. Throughout this section we work in $\C = \Man$. 

We begin by noting that assumptions on the interface between the RoM and FoM can be understood categorically. 

\newpage

\begin{assumption}[Projection compatibility]
\label{ass:projection}
There are smooth maps $\Pi \colon X\to Z$ and $\Psi \colon X\to U_{\nu}$ such that, for every $(x,v)\in X\times U_v$, it follows that: 
\begin{equation}
    D\Pi(x)f_{\rm F}(x,v)=f_{\rm R}(\Pi(x),\Psi(x)).
\label{eq:projection}
\end{equation}
Equivalently, the following dynamics square commutes:
\begin{equation}
\begin{tikzcd}[column sep=10mm,row sep=6mm]
    X\times U_v \ar[r,"{(\Pi,\Psi)}"]\ar[d,"f_{\rm F}"']&Z\times U_{\nu} \ar[d,"f_{\rm R}"]\\
    T X\ar[r,"T\Pi"']&TZ.
\end{tikzcd}
\label{eq:projection-diagram}
\end{equation}
where $(\Pi,\Psi)(x,v)=(\Pi(x),\Psi(x))$.
\end{assumption}

Since the FoM is obtained from a tracking controller $u=K(x,v)$ applied to the full order dynamics: $f_{\rm F}(x,v)=F(x)+G(x)K(x,v)$, sufficient conditions \cite{Compton2025} for \eqref{eq:projection-diagram} to hold are: $D\Pi(x)\,G(x) =0$ and $D\Pi(x)\,F(x)=f_{\rm R}(\Pi(x),\Psi(x))$. 

\subsection{The safety-filter mediator and the closed-loop pullback}

We can mirror the conceptual safety architecture in Figure \ref{fig:architecture} with a precise categorical representation as a cospan: 
\begin{equation}
\label{eq:physical-interfaces}
\begin{tikzpicture}[
  >=latex,
  system/.style={
    draw=black!65,
    rounded corners=2pt,
    fill=white,
    text width=.63\linewidth,
    align=center,
    inner xsep=1.2mm,
    inner ysep=.9mm,
    font=\footnotesize
  },
  morphism/.style={
    ->,
    line width=.65pt
  },
  maplabel/.style={
    fill=white,
    inner xsep=1.0mm,
    inner ysep=.4mm,
    align=left,
    font=\scriptsize
  }
]

\node[
  system,
  minimum height=10mm
] (rom) {
  $\displaystyle
  \mathbf P_{\rm RoM}
  =
  \bigl(p_Z \colon Z\times U_\nu\to Z,f_{\rm R}\bigr)
  $
  \\ [.2mm]
  $\displaystyle
  \dot z=f_{\rm R}(z,\nu)
  $
};

\node[
  system,
  below=10mm of rom,
  minimum height=10mm
] (med) {
  $\displaystyle
  \mathbf M_{\rm sf}
  =
  \bigl(
    p_0 \colon Z\times U_\nu\times U_v\to Z,f_{\rm sf}
  \bigr)
  $
  \\ [.2mm]
  $\displaystyle
  \dot z=f_{\rm sf}(z,\nu,v)=f_{\rm R}(z,\nu)
  $
};

\node[
  system,
  below=10mm of med,
  minimum height=10mm
] (fom) {
  $\displaystyle
  \mathbf P_{\rm FoM}
  =
  \bigl(p_X \colon X\times U_v\to X,f_{\rm F}\bigr)
  $
  \\ [.2mm]
  $\displaystyle
  \dot x=f_{\rm F}(x,v)
  $
};

\draw[morphism]
  (rom.south) --
  node[
    maplabel,
    midway,
    right=1.5mm
  ] {
    $\begin{aligned}
        \phi_R^B(z) & =z \\
        \phi_R^E(z,\nu)
       & =(z,\nu,\kappa_{\rm safe}(z))
    \end{aligned}$
  }
  (med.north);

\draw[morphism]
  (fom.north) --
  node[
    maplabel,
    midway,
    right=1.5mm
  ] {
    $\begin{aligned}
        \phi_F^B(x) & =\Pi(x) \\
        \phi_F^E(x,v)
       & =(\Pi(x),\Psi(x),v)
    \end{aligned}
    $
  }
  (med.south);

\end{tikzpicture}
\end{equation}
Here, the mediator has copies of $U_{\nu}$ and $U_v$ to receive control interfaces from the RoM and FoM. It is these interfaces that encode the safety filter itself, $\kappa_{\rm safe}$. 
Throughout this section, let $\kappa_{\rm safe}$ be a smooth safety filter satisfying the constraint in \eqref{eq:qp}, not necessarily its exact optimizer, which exist by \cite{cohen2023characterizing}. 

The closed-loop system is obtained via the pullback. 

\begin{proposition}[Composition is the safety filtered FoM]
\label{prop:closedloop}
Under Assumption~\ref{ass:projection}, the cospan~\eqref{eq:physical-interfaces} has the pullback
\begin{equation}
 \PC:=\PR\times_{\MS}\PF\cong(\id_X \colon X\to X,\fcl),
 \label{eq:physical-pullback}
\end{equation}
where $\fcl$ is given by~\eqref{eq:closedloop}.
\end{proposition}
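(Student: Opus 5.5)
We compute the pullback of the cospan~\eqref{eq:physical-interfaces} componentwise, as in Proposition~\ref{prop:physical}. We then identify the induced control system with $\fcl$ via Assumption~\ref{ass:projection}.

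\emph{State pullback.} Since $\phi_R^B=\id_Z$ and $\phi_F^B=\Pi$, the state pullback is
\[
B=Z\times_Z X=\{(z,x): z=\Pi(x)\}\cong X ,
\]
with projections $p_1^B=\Pi$ and $p_2^B=\id_X$. This pullback exists in $\Man$ because one leg is the identity, hence trivially transverse. It is preserved by $T$, since $TZ\times_{TZ}TX\cong TX$. Hence Assumption~\ref{ass:pullback} holds on states.

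\emph{Input pullback.} Next compute
\[
E=(Z\times U_\nu)\times_{E_0}(X\times U_v),
\]
the set of pairs $((z,\nu),(x,v))$ satisfying $(z,\nu,\kappa_{\rm safe}(z))=(\Pi(x),\Psi(x),v)$. These equations force $z=\Pi(x)$, $\nu=\Psi(x)$ and $v=\ks(\Pi(x))$. So $E$ is the graph of the smooth map $x\mapsto(\Pi(x),\Psi(x),\ks(\Pi(x)))$, and is diffeomorphic to $X$ via $x\mapsto((\Pi(x),\Psi(x)),(x,\ks(\Pi(x))))$. Being a graph of a smooth map, it is an embedded submanifold, so the pullback exists in $\Man$. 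The induced bundle $p\colon E\to B$ becomes $\id_X$ under these identifications, since $p_2^B\circ p=p_X\circ p_2^E$ sends $x\mapsto x$. Thus the composite is autonomous.

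\emph{Dynamics.} By Proposition~\ref{prop:physical}, $f\colon E\to TB\cong TX$ is the unique map with $Tp_2^B\circ f=f_{\rm F}\circ p_2^E$ and $Tp_1^B\circ f=f_{\rm R}\circ p_1^E$. The first condition already determines it: $f(x)=f_{\rm F}(x,\ks(\Pi(x)))=\fcl(x)$.

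The main point to check is that this candidate is compatible with the RoM leg. That is, we need
\[
T\Pi\circ f_{\rm F}(x,\ks(\Pi(x)))=f_{\rm R}(\Pi(x),\Psi(x)),
\]
which is exactly Assumption~\ref{ass:projection} evaluated at $v=\ks(\Pi(x))$. This compatibility is what makes the two legs a cone over $TZ$, so the universal map exists. It is also why the mediator dynamics $f_{\rm sf}$ ignore $v$.

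We also need to verify that $\phi_R$ and $\phi_F$ are genuine morphisms of control coalgebras, so that the cospan lives in $\Ctrl_T(\Man)$.
\begin{itemize}
\item For $\phi_R$, the relation $f_{\rm sf}\circ\phi_R^E=f_{\rm R}$ is immediate.
\item For $\phi_F$, we need $f_{\rm R}(\Pi(x),\Psi(x))=T\Pi\, f_{\rm F}(x,v)$, which is again \eqref{eq:projection-diagram}.
\end{itemize}
Collecting the identifications gives $\PC\cong(\id_X,\fcl)$, as claimed in \eqref{eq:physical-pullback}.
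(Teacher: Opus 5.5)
Your proof is correct and follows the same route as the paper: impose equality in the mediator to force $z=\Pi(x)$, $\nu=\Psi(x)$, $v=\ks(\Pi(x))$, identify both the state and input pullbacks with $X$ (so the bundle is $\id_X$), read off $\fcl$ from the FoM leg, and close the cone on the RoM leg via Assumption~\ref{ass:projection}. Beyond the paper's argument, you also check that these pullbacks exist in $\Man$, that $T$ preserves the state pullback, and that $\bmap{\phi}_R,\bmap{\phi}_F$ are genuine morphisms; the paper leaves these points implicit.
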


\begin{proof}
The pullback imposes equality in the mediator: 
\[
 (z,\nu,\ks(z))=(\Pi(x),\Psi(x),v).
\]
This determines $z=\Pi(x)$, $\nu=\Psi(x)$, and $v=\ks(\Pi(x))$ uniquely. 
Hence the state space pullback and control bundle pullback are both $X$, with the identity bundle projection. 
The FoM projection determines the dynamics of the pullback as $f_{\rm F}(x,\ks(\Pi(x)))$. Assumption~\ref{ass:projection} verifies the RoM projection, completing the pullback cone. 
\end{proof}

As in Section \ref{sec:background}, and specifically \eqref{eq:classical-bounds}, we consider a smooth non-negative \define{tracking function} $V \colon X\to\R_{\geq 0}$ satisfying: 

\newpage

\begin{assumption}[Tracking]
\label{ass:tracking}
On $X$, for constants $\gamma,\rho>0$ and $\lambda>\alpha\ge0$, assume: 
\begin{equation}
 \begin{aligned}
 V(x)&\ge \rho\|\Psi(x)-\ks(\Pi(x))\|^2,\\
 DV(x)\fcl(x)&\le-\lambda V(x).
 \end{aligned}
 \label{eq:tracking}
\end{equation}
\end{assumption}

Let $b:=h\circ\Pi$. To recover the \emph{barrier dynamics} in \eqref{eq:classical-bounds}, by Assumption~\ref{ass:projection}, the RoM CBF inequality, and Assumption~\ref{ass:tracking}: 
\begin{align}
    \dot b
    &=Dh(\Pi(x))f_{\rm R}(\Pi(x),\Psi(x)) \nonumber\\
    &=L_fh(\Pi(x))+L_gh(\Pi(x))\ks(\Pi(x))
       +L_gh(\Pi(x))e(x) \nonumber\\
    & \ge-\alpha b + \tfrac{\| L_g h(z) \|^2}{4 \gamma \rho} + L_g h(z) e    \nonumber\\
    & \ge-\alpha b-\gamma\rho\|e\|^2 \ge-\alpha b-\gamma V.
    \label{eq:two-bounds}
\end{align}
where here $z = \Pi(x)$ and the last inequalities follow from completing the square and \eqref{eq:tracking}. 
The end result is two certificates on the closed loop: $b$, built from the RoM barrier $h$, satisfying $\dot b \geq -\alpha b - \gamma V$, and the tracking function $V$, satisfying $\dot V \leq -\lambda V$. These two certificates will form the basis for constructing the target systems, and will ultimately generate the induced certificate for $\PC$.

\subsection{Comparison targets and their pullback}

The barrier lower bound~\eqref{eq:two-bounds} and the tracking decay
assumption~\eqref{eq:tracking} are encoded by the
comparison systems $\mathbf{R}_h$ and $\mathbf{R}_V$ via the cospan: 
\begin{equation}
\label{eq:comparison-cospan}
\begin{tikzpicture}[
  >=latex,
  baseline={([yshift=-.5ex]rzero.center)},
  system/.style={
    draw=black!65,
    rounded corners=2pt,
    fill=white,
    text width=.54\linewidth,
    align=center,
    inner xsep=.8mm,
    inner ysep=.6mm,
    outer sep=0pt,
    font=\footnotesize
  },
  morphism/.style={->,line width=.65pt},
  maplabel/.style={font=\scriptsize,inner sep=0pt,outer sep=0pt}
]

\node[system] (rh) {
  $\mathbf R_h
    = \bigl(\R^2_{(b,r)},\,\sigma_h\bigr)$
  \\[.25mm]
  $\sigma_h ~ \textrm{dynamics:} ~  \begin{cases}\dot b&=-\alpha b-\gamma r\\\dot r&=-\lambda r\end{cases}$
};

\node[system,below=6mm of rh] (rzero) {
  $\mathbf R_0
    = \bigl(\R_r,\,\sigma_0\bigr)$
  \\[.25mm]
  $\sigma_0 ~ \textrm{dynamics:} ~ \dot r=-\lambda r$
};

\node[system,below=6mm of rzero] (rv) {
  $\mathbf R_V
    = \bigl(\R_s,\,\sigma_V\bigr)$
  \\[.25mm]
  $\sigma_V ~ \textrm{dynamics:} ~ \dot s=-\lambda s$
};

\draw[morphism]
  (rh.south) --
  node[midway,right=1.4mm,maplabel] {$n_1=\mathrm{pr}_r$}
  (rzero.north);

\draw[morphism]
  (rv.north) --
  node[midway,right=1.4mm,maplabel] {$n_2=\mathrm{id}_{\mathbb R}$}
  (rzero.south);

\end{tikzpicture}
\end{equation}
At equal base points, the preorders on $\mathbf{R}_h$ and $\mathbf{R}_V$ are:
\begin{equation}
\begin{aligned}
    \textrm{Preorder on } T \R^2_{b,r} \hspace{-1mm}: &   \\
    \textrm{Preorder on } T \R_s \hspace{-1mm}: & 
\end{aligned} 
\quad 
 \begin{aligned}
 (\dot b,\dot r)\ge_h(\dot b',\dot r')&\iff\dot b\ge\dot b',\\
 \dot s\ge_V\dot s'&\iff\dot s\le\dot s'.
 \end{aligned}
 \label{eq:mixed}
\end{equation}
The first order tests the barrier lower bound, while the second order tests the tracking upper bound.

The goal is to find a  composed comparison system $\mathbf{Q} \cong \mathbf{R}_h \times_{\mathbf{R}_0} \mathbf{R}_V$ such that the conditions of Corollary \ref{cor:composition} are satisfied.
Set $a := \lambda - \alpha > 0$. The composed comparison system $\mathbf{Q}$ is given in \((b,\ell)\)-coordinates with corresponding projections $\bmap{p}_h^R$ and $\bmap{p}_V^R$ as given in the following pullback diagram (see Remark \ref{rem:Qjustification} for additional details on choosing $\mathbf{Q}$):
\begin{equation}
\label{eq:comparison-pullback}
\begin{tikzpicture}[
  >=latex,
  baseline={([yshift=-.5ex]rzero.center)},
  system/.style={
    draw=black!65,
    rounded corners=2pt,
    fill=white,
    text width=.41\linewidth,
    align=center,
    inner xsep=.8mm,
    inner ysep=.6mm,
    outer sep=0pt,
    font=\footnotesize
  },
  morphism/.style={->,line width=.65pt},
  maplabel/.style={font=\scriptsize,inner sep=.4pt,outer sep=0pt,align=center}
]

\node[system] (q) {
  $\mathbf Q
    = \bigl(\R^2_{(b,\ell)},\,\sigma\bigr)$
  \\[.25mm]
  $\sigma ~ \textrm{dynamics:} ~ \begin{cases}\dot b&=-\lambda b+\ell\\\dot\ell&=-\alpha\ell\end{cases}$
};

\node[system,right=31mm of q,text width=.14\linewidth] (rh) {
  $\mathbf R_h$
};

\node[system,below=12mm of q,text width=.14\linewidth] (rv) {
  $\mathbf R_V$
};

\node[system,right=42.8mm of rv,text width=.14\linewidth] (rzero) {
  $\mathbf R_0$
};

\draw[morphism]
  (q.east) --
  node[midway,above=1.2mm,maplabel]
  {$\displaystyle
    p_h^R(b,\ell)
    =
    \left(
      b,\,
      \frac{ab-\ell}{\gamma}
    \right)$}
  (rh.west);

\draw[morphism]
  (q.south) --
  node[midway,left=1.4mm,maplabel]
  {$\displaystyle
    p_V^R(b,\ell)
    =
    \frac{ab-\ell}{\gamma}$}
  (rv.north);

\draw[morphism]
  (rh.south) --
  node[midway,left=1.4mm,maplabel] {$n_1=\mathrm{pr}_r$}
  (rzero.north);

\draw[morphism]
  (rv.east) --
  node[midway,below=1.2mm,maplabel] {$n_2=\mathrm{id}_{\mathbb R}$}
  (rzero.west);

\node at ($(q.south east)!0.2!(rzero.north west)$) {$\lrcorner$};

\end{tikzpicture}
\end{equation}

\newpage
The fact that $\mathbf{Q}$ is the pullback of \eqref{eq:comparison-cospan} follows from the observation that $L \colon \R^2_{(b,r)} \to \R^2_{(b,\ell)}$, given by $L(b,r)=(b,ab-\gamma r)$ is invertible and satisfies $TL\circ \sigma_h=\sigma\circ L$. 

The dynamics preorder on \(\mathbf Q\), i.e., the preorder on $T \R^2_{(b,\ell)}$, is the one induced by the
projections in \eqref{eq:comparison-pullback} per \eqref{eq:induced-order}. In \((b,\ell)\)-coordinates at a fixed base point this is given by: 
\begin{equation}
(\dot{b},\dot{\ell}) \geq_{\mathbf{Q}} (\dot{b}',\dot{\ell}') \quad \iff \quad 
\dot{b} \geq \dot{b}', \quad 
\dot{\ell} - \dot{\ell}' \geq a (\dot{b} - \dot{b}') . \nonumber
\end{equation}
Since $a > 0$, this implies the ordinary componentwise preorder on $T \R^2_{(b,\ell)}$. Finally, the dynamics on $\mathbf{Q}$ 
imply that $\R^2_{\geq 0}$ is forward invariant verifying Assumption \ref{ass:comparison}.

\begin{remark}[Finding the comparison system]
\label{rem:Qjustification}
It is important to note that the choice of $\mathbf{Q}$ was guided by the need to satisfy Assumption \ref{ass:comparison} for the induced preorder \eqref{eq:induced-order} as required by Corollary \ref{cor:composition}. For example, a naive (but valid) choice for the pullback is $\mathbf{R}_h = (\R^2_{(b,r)},\sigma_h) \cong \mathbf{R}_h \times_{\mathbf{R}_0} \mathbf{R}_V$. Yet the dynamics $\sigma_h$ for this system do not satisfy Assumption \ref{ass:comparison}: at $b=0$, $r>0$, it follows that $\dot b=-\gamma r<0$. 
Now, to understand how we arrived at $\mathbf{Q}$, we can determine the largest forward invariant subset of $\{ (b,r) : b \geq 0\}$: 
\[
 K :=\{(b,r):b\ge0,\;ab-\gamma r\ge0\} .
\]
Defining $L(b,r)=(b,ab-\gamma r)$ as above, it follows that $K = \{ (b,r) : L(b,r) \geq 0\}$ and $L(K)=\mathbb R_{\ge0}^2$.
Transporting the pullback through $L$ yields
$\mathbf Q$ as defined above. 
\end{remark} 

\subsection{The induced certificate and full-order model safety}

The goal is now to apply the main result, Theorem \ref{thm:composition}, to obtain a control barrier function for the closed-loop system $\mathbf{P}_{\rm cl}$ representing the layered architecture. To this end, consider the barrier certificate $\chi_h :=(b,V) \colon X\to\R^2_{(b,r)}$ and the tracking certificate $\chi_V := V\colon X\to\R_s$. These are compatible since $\pr_r\circ\chi_h=\chi_V$. Moreover, \eqref{eq:two-bounds} and \eqref{eq:tracking} imply that: 
\[
T\chi_h\circ\fcl\ge_h\sigma_h\circ\chi_h,\qquad
 T\chi_V\circ\fcl\ge_V\sigma_V\circ\chi_V
\]
for the chosen target systems (by design).

The target pullback $\mathbf{Q}$ induces a unique morphism: 
\[
\begin{tikzcd}[column sep = scriptsize, row sep = 2mm]
    &&&
    \RH
    \arrow[dr, "\bmap n_1"]
    \\
    \PC
    \arrow[urrr, bend left, "{\chi_h = (b,V)}"description]
    \arrow[rr, "\exists!\chi"description, dashed]
    \arrow[drrr, bend right, "\chi_V=V"description]
    &&
    \QT
    \arrow[ur, "\bmap p_h^R"]
    \arrow[dr, " \bmap p_V^R"']
    \arrow[rr, phantom, "\lrcorner"{rotate = 45}, pos = 0.2]
    &&
    \RO
    \\&&&
    \RV
    \arrow[ur, "\bmap n_2"']
\end{tikzcd}
\]
where the $\chi$ arrows denote state maps satisfying lax dynamics inequalities \eqref{eq:tracking}, \eqref{eq:two-bounds}. 
Applying Theorem \ref{thm:composition} with $p_h^R$ and $p_V^R$ implies that the universal property uniquely determines: 
\begin{equation}
 \chi(x)=(b(x),H(x)),\qquad H(x)=a\,b(x)-\gamma V(x). 
 \label{eq:induced-H}
\end{equation}
Moreover, the conclusion that $T\chi\circ\fcl\ge\sigma\circ\chi$ yields: 
\begin{equation}
 \dot b\ge-\lambda b+H,\qquad \dot H\ge-\alpha H.
 \label{eq:H-comparison}
\end{equation}
This allows us to establish through categorical means safety of $\PC$, and specifically the closed-loop FoM first considered in \eqref{eq:closedloop}.  Since $\PC$ is autonomous, $\A_{\chi_h} \cong \A_{\chi_V} \cong X$. 

\newpage

\begin{theorem}[Layered safety]
\label{thm:layered}
Suppose Assumptions~\ref{ass:projection} and~\ref{ass:tracking} hold, and that there exists a smooth safe controller $\nu = \kappa_{\rm safe}(z)$ for the RoM $\dot{z} = f_{\rm R}(z,\nu)$ satisfying: 
\[
D h(z) f_{\rm R}(z,\kappa_{\rm safe}(z)) \geq - \alpha h(z) +  \tfrac{\| L_g h(z) \|^2}{4 \gamma \rho}. 
\]
If $\lambda > \alpha$, for closed-loop FoM \eqref{eq:closedloop} the set:
\begin{equation}
 S_\chi=\{x\in X:(\lambda-\alpha)h(\Pi(x))\ge\gamma V(x)\}
 \label{eq:final-safe}
\end{equation}
is forward invariant. Therefore, since $S_\chi\subseteq\mathcal C$, for all solutions $c \colon I \to X$ of $\dot{x} = \fcl(x)  = f_{\rm F}(x,\kappa_{\rm safe}(\Pi(x)))$:
\[
c(0) \in  S_\chi \qquad \implies \qquad h(\Pi(c(t))) \geq 0  \quad  \forall t \in I. 
\]  
\end{theorem}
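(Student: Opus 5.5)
The plan is to obtain Theorem~\ref{thm:layered} as an instance of Corollary~\ref{cor:composition}, applied to the closed-loop pullback $\PC$ of Proposition~\ref{prop:closedloop} and the composed target $\QT$ of \eqref{eq:comparison-pullback}.

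\textbf{Step 1 (the system side).} By Proposition~\ref{prop:closedloop}, under Assumption~\ref{ass:projection} we have $\PC\cong(\id_X,\fcl)$. This coalgebra is autonomous, so every admissible locus is a subobject of $E=X$.

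\textbf{Step 2 (the two component certificates).} We show that $\chi_h=(b,V)$ and $\chi_V=V$ are CBMs for $\PC$ whose admissible loci are all of $X$.
\begin{itemize}
\item For $\chi_h$, this is exactly the computation \eqref{eq:two-bounds}. It combines Assumption~\ref{ass:projection}, the RoM constraint on $\ks$, completing the square in $L_gh\,e$, and $V\ge\rho\|e\|^2$, giving $\dot b\ge-\alpha b-\gamma V$. It also uses $\dot V\le-\lambda V$ from Assumption~\ref{ass:tracking}, although the order $\ge_h$ only tests the $b$-component.
\item For $\chi_V$, the inequality under $\ge_V$ is exactly $\dot V\le-\lambda V$.
\end{itemize}
Hence each admissible locus is $X$ itself, and $\pi$ is the identity, which is trivially an epi and point-surjective. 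The pullback $\A=\A_{\chi_h}\times_X\A_{\chi_V}\cong X$ exists, and $\pi_\A=\id_X$. Compatibility \eqref{eq:compatible} holds because $\pr_r\circ\chi_h=V=\id\circ\chi_V$.

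\textbf{Step 3 (composing via Theorem~\ref{thm:composition}).} Theorem~\ref{thm:composition} yields a unique $\chi\colon X\to\R^2_{(b,\ell)}$ with $p_h^R\circ\chi=\chi_h$ and $p_V^R\circ\chi=\chi_V$. Writing $\chi=(\beta,\ell)$, the first condition forces $\beta=b$ and $(ab-\ell)/\gamma=V$. Therefore $\ell=ab-\gamma V=H$, which recovers \eqref{eq:induced-H}, and $\chi$ satisfies $T\chi\circ\fcl\ge_{\QT}\sigma\circ\chi$.

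\textbf{Step 4 (verifying Assumption~\ref{ass:comparison} for $\QT$ and concluding).} This is where I expect the main care to be needed. Take $0_R=(0,0)$ with the componentwise order on $\R^2$. The induced preorder implies componentwise inequality, so a curve $u=(u_1,u_2)$ with $u(0)\ge0$ and $\dot u\ge\sigma(u)$ satisfies $\dot u_2\ge-\alpha u_2$ and $\dot u_1\ge-\lambda u_1+u_2$. The scalar comparison lemma gives $u_2\ge0$. Feeding this into the first inequality gives $\dot u_1\ge-\lambda u_1$, so $u_1\ge0$ as well. (The equal-base-point requirement in the order on $T\R^2$ is automatic, since both sides project to $u$.)

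Corollary~\ref{cor:composition}, with the section $u=\id_X$, then yields forward invariance of
\[
S_\chi=\{x\in X:\ b(x)\ge0,\ H(x)\ge0\}.
\]
This set should be identified with \eqref{eq:final-safe}. Since $\gamma V\ge0$, the condition $H\ge0$ gives $(\lambda-\alpha)b\ge\gamma V\ge0$, and with $\lambda>\alpha$ this forces $b\ge0$. So the constraint $b\ge0$ is redundant, and $S_\chi=\{x:(\lambda-\alpha)h(\Pi(x))\ge\gamma V(x)\}$. The same computation shows $S_\chi\subseteq\mathcal C$, from which the trajectory statement follows.

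The remaining subtlety is that lax equalizers live in $\Man$. I would sidestep this by working at the level of points and solutions, which is all the forward-invariance conclusion requires.
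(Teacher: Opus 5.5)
Your proposal is correct and follows the paper's route. The paper's proof appeals to Corollary~\ref{cor:composition}, with the component CBMs, the induced $\chi=(b,H)$, and Assumption~\ref{ass:comparison} for $\QT$ all set up in the text preceding the theorem. It then makes exactly your identification of $S_\chi=\{b\ge0,\ H\ge0\}$ with \eqref{eq:final-safe}, using $\lambda-\alpha>0$ and $V\ge0$.

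Your Step~4 cascade argument is a slightly more careful check of Assumption~\ref{ass:comparison} than the paper's remark that $\R^2_{\ge0}$ is forward invariant under $\sigma$. The assumption concerns differential inequalities, not exact solutions, and your argument handles that case directly.
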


\begin{proof}
Safety of $S_{\chi}$ follows from Corollary~\ref{cor:composition}. It only remains to verify the form of $S_{\chi}$ given in \eqref{eq:final-safe}. 
With the component-wise order on $R = \R^2_{(b,\ell)}$: 
\[
\begin{aligned}
S_{\chi} =\Eq^{\le}(\zero X,\chi) & = 
\{ x \in X : \chi(x) \geq 0 \}  \\
& = \{ x \in X : h(\Pi(x)) \geq 0, ~ H(x) \geq 0\}, 
\end{aligned}
\]
since $b = h \circ \Pi$. But $a = \lambda - \alpha > 0$ and $V \geq 0$, and therefore: 
\[
H(x)  \geq 0 ~  \implies ~ a h(\Pi(x)) \geq \gamma V(x) ~  \implies  ~ h(\Pi(x)) \geq 0. 
\]
This implies that $S_{\chi}$ is given by \eqref{eq:final-safe} and that $S_\chi\subseteq\mathcal C$. 
\end{proof}

Note that the barrier morphism in this case, $\chi$, takes values in $\R^2$. We can obtain a classical scalar barrier as well. 
To this end, consider the projection $\mathbf{pr}_{\ell} \colon \QT\to(\R,\sigma_\alpha)$, where $\sigma_\alpha(y)=-\alpha y$, is a strict dynamics-preserving morphism. Therefore, pasting with $\chi$ gives the scalar barrier $H=\pr_\ell\circ\chi$:
\begin{equation}
\begin{tikzcd}[column sep=12mm,row sep=8mm]
    X
    \ar[r,"\chi"]
    \ar[rr,bend left=30,"H"]
    \ar[d,"\fcl"']
    &
    \R^2
    \ar[r,"\pr_\ell"]
    \ar[d,"\sigma"]
    &
    \R
    \ar[d,"\sigma_\alpha"]
    \\
    T X 
    \ar[r,"T\chi"']
    \ar[rr,bend right=30,"TH"']
    \arrow[ur, phantom, sloped, "\geq", pos = 0.5]
    &
    T\R^2
    \ar[r,"T\pr_\ell"']
    &
    T\R.
\end{tikzcd}
~  \rightsquigarrow \quad \dot{H} \geq - \alpha H
\label{eq:H-pasting} \nonumber 
\end{equation}
This, again, implies that $S_{\chi} = \{ x \in X : H(x) \geq 0\}$ is forward invariant and the conclusions of Theorem \ref{thm:layered} follow. 

\section{Conclusion}

This paper presented a categorical framework in which to represent and analyze layered control architectures. To this end, we formulated ``control systems'' categorically via control coalgebras. While $\C = \Man$ was considered in this paper for simplicity, the framework is general and can be applied to a wide range of system types. 
Composition can be understood categorically as the pullback over a mediator. The main result showed how the universal property of the pullback yields CBMs on the composed system given CBMs on each subsystem. We applied this result to a layered architecture representing safety filtering a FoM with a RoM, recapturing previous results as a specialization. Looking forward, the goal is to apply this categorical framework to more complex layered architectures with heterogeneous control systems.

\newpage
\bibliographystyle{IEEEtran}
\bibliography{references}

\begin{thebibliography}{10}
\providecommand{\url}[1]{#1}
\csname url@samestyle\endcsname
\providecommand{\newblock}{\relax}
\providecommand{\bibinfo}[2]{#2}
\providecommand{\BIBentrySTDinterwordspacing}{\spaceskip=0pt\relax}
\providecommand{\BIBentryALTinterwordstretchfactor}{4}
\providecommand{\BIBentryALTinterwordspacing}{\spaceskip=\fontdimen2\font plus
\BIBentryALTinterwordstretchfactor\fontdimen3\font minus \fontdimen4\font\relax}
\providecommand{\BIBforeignlanguage}[2]{{%
\expandafter\ifx\csname l@#1\endcsname\relax
\typeout{** WARNING: IEEEtran.bst: No hyphenation pattern has been}%
\typeout{** loaded for the language `#1'. Using the pattern for}%
\typeout{** the default language instead.}%
\else
\language=\csname l@#1\endcsname
\fi
#2}}
\providecommand{\BIBdecl}{\relax}
\BIBdecl

\bibitem{Matni2024}
N.~Matni, A.~D. Ames, and J.~C. Doyle, ``A quantitative framework for layered multirate control: Toward a theory of control architecture,'' \emph{IEEE Control Systems Magazine}, vol.~44, no.~3, pp. 52--94, 2024.

\bibitem{cohen2024safety}
M.~H. Cohen, T.~G. Molnar, and A.~D. Ames, ``Safety-critical control for autonomous systems: Control barrier functions via reduced-order models,'' \emph{Annual Reviews in Control}, vol.~57, p. 100947, 2024.

\bibitem{Aoki1968Aggregation}
M.~Aoki, ``Control of large scale dynamic systems by aggregation,'' \emph{IEEE Transactions on Automatic Control}, vol.~13, pp. 246--253, Jun. 1968.

\bibitem{Mesarovic1970Hierarchical}
M.~D. Mesarovic, D.~Macko, and Y.~Takahara, \emph{Theory of Hierarchical, Multilevel, Systems}, ser. Mathematics in Science and Engineering.\hskip 1em plus 0.5em minus 0.4em\relax New York: Academic Press, 1970, vol.~68.

\bibitem{ZhongWonham1990Consistency}
H.~Zhong and W.~M. Wonham, ``On the consistency of hierarchical supervision in discrete-event systems,'' \emph{IEEE Transactions on Automatic Control}, vol.~35, no.~10, pp. 1125--1134, Oct. 1990.

\bibitem{Willems2007Behavioral}
J.~C. Willems, ``The behavioral approach to open and interconnected systems,'' \emph{IEEE Control Systems Magazine}, vol.~27, no.~6, pp. 46--99, Dec. 2007.

\bibitem{Tabuada2009Verification}
P.~Tabuada, \emph{Verification and Control of Hybrid Systems: A Symbolic Approach}.\hskip 1em plus 0.5em minus 0.4em\relax New York: Springer, 2009.

\bibitem{Mazo2024Contract}
M.~Mazo~Jr., W.~Compton, M.~H. Cohen, and A.~D. Ames, ``A contract theory for layered control architectures,'' 2024, arXiv preprint arXiv:2409.14902.

\bibitem{10.1145/3764587}
\BIBentryALTinterwordspacing
N.~V. Naik, A.~Pinto, and P.~Nuzzo, ``Contract embeddings for layered control architectures,'' \emph{ACM Trans. Embed. Comput. Syst.}, vol.~24, no.~5s, Sep. 2025. [Online]. Available: \url{https://doi.org/10.1145/3764587}
\BIBentrySTDinterwordspacing

\bibitem{Rosolia2022}
U.~Rosolia, A.~Singletary, and A.~D. Ames, ``Unified multirate control: From low-level actuation to high-level planning,'' \emph{IEEE Transactions on Automatic Control}, vol.~67, no.~12, pp. 6627--6640, 2022.

\bibitem{AmesCBF2017}
A.~D. Ames, X.~Xu, J.~W. Grizzle, and P.~Tabuada, ``Control barrier function based quadratic programs for safety critical systems,'' \emph{IEEE Transactions on Automatic Control}, vol.~62, no.~8, pp. 3861--3876, 2017.

\bibitem{Molnar2022}
T.~G. Moln{\'a}r, R.~K. Cosner, A.~W. Singletary, W.~Ubellacker, and A.~D. Ames, ``Model-free safety-critical control for robotic systems,'' \emph{IEEE Robotics and Automation Letters}, vol.~7, no.~2, pp. 944--951, 2022.

\bibitem{Compton2025}
W.~D. Compton, M.~H. Cohen, and A.~D. Ames, ``Learning for layered safety-critical control with predictive control barrier functions,'' in \emph{Proceedings of the 7th Annual Learning for Dynamics \& Control Conference}, ser. Proceedings of Machine Learning Research, vol. 283, 2025, pp. 153--165.

\bibitem{Rutten2000}
J.~J. M.~M. Rutten, ``Universal coalgebra: A theory of systems,'' \emph{Theoretical Computer Science}, vol. 249, no.~1, pp. 3--80, 2000.

\bibitem{AmesNESC2006}
A.~D. Ames, A.~Sangiovanni-Vincentelli, and S.~Sastry, ``Homogeneous semantics preserving deployments of heterogeneous networks of embedded systems,'' in \emph{Networked Embedded Sensing and Control}.\hskip 1em plus 0.5em minus 0.4em\relax Springer, 2006, pp. 127--154.

\bibitem{Tabuada2008}
P.~Tabuada, ``Controller synthesis for bisimulation equivalence,'' \emph{Systems \& Control Letters}, vol.~57, no.~6, pp. 443--452, 2008.

\bibitem{Vagner2015}
D.~Vagner, D.~I. Spivak, and E.~Lerman, ``Algebras of open dynamical systems on the operad of wiring diagrams,'' \emph{Theory and Applications of Categories}, vol.~30, no.~51, pp. 1793--1822, 2015.

\bibitem{Culbertson2020}
J.~Culbertson, P.~Gustafson, D.~E. Koditschek, and P.~F. Stiller, ``Formal composition of hybrid systems,'' \emph{Theory and Applications of Categories}, vol.~35, no.~45, pp. 1634--1682, 2020.

\bibitem{CapucciMyers2026}
M.~Capucci and D.~J. Myers, ``Compositionality of {Lyapunov} functions via assume-guarantee reasoning,'' 2026, arXiv:2604.03017.

\bibitem{CLT1}
A.~D. Ames, J.~Moeller, and P.~Tabuada, ``Categorical {Lyapunov} theory {I}: Stability of flows,'' \emph{Applied Categorical Structures}, vol.~34, no.~60, 2026.

\bibitem{CLT2}
A.~D. Ames, S.~Mattenet, and J.~Moeller, ``Categorical {Lyapunov} theory {II}: Stability of systems,'' 2025, arXiv:2505.22968.

\bibitem{cohen2025safety}
M.~H. Cohen, N.~Csomay-Shanklin, W.~D. Compton, T.~G. Molnar, and A.~D. Ames, ``Safety-critical controller synthesis with reduced-order models,'' in \emph{2025 American Control Conference (ACC)}.\hskip 1em plus 0.5em minus 0.4em\relax IEEE, 2025, pp. 5216--5221.

\bibitem{nanayakkara2026safetystateuncertaintyrobustifying}
\BIBentryALTinterwordspacing
R.~Nanayakkara, A.~D. Ames, and P.~Tabuada, ``Safety under state uncertainty: Robustifying control barrier functions,'' 2026. [Online]. Available: \url{https://arxiv.org/abs/2508.17226}
\BIBentrySTDinterwordspacing

\bibitem{moeller2026hybrid}
J.~Moeller and A.~D. Ames, ``Hybrid systems as coalgebras: Lyapunov morphisms for {Zeno} stability,'' \emph{arXiv preprint arXiv:2604.05255}, 2026.

\bibitem{NonlinearSystems}
H.~K. Khalil, \emph{Nonlinear Systems}, 3rd~ed.\hskip 1em plus 0.5em minus 0.4em\relax Upper Saddle River, NJ: Prentice Hall, 2002.

\bibitem{cohen2023characterizing}
M.~H. Cohen, P.~Ong, G.~Bahati, and A.~D. Ames, ``Characterizing smooth safety filters via the implicit function theorem,'' \emph{IEEE Control Systems Letters}, vol.~7, pp. 3890--3895, 2023.

\end{thebibliography}
\end{document}